\newtheorem{thm}{Theorem}
\newtheorem*{thm*}{Theorem}
\newtheorem{lemma}[thm]{Lemma}
\newtheorem{corollary}[thm]{Corollary}
\newtheorem{conjecture}[thm]{Conjecture}
\newtheorem{prop}[thm]{Proposition}
\newtheorem*{prop*}{Proposition}
\theoremstyle{definition}
\newtheorem{defn}[thm]{Definition}
\theoremstyle{remark}
\newtheorem{remark}[thm]{Remark}
\newtheorem{example}[thm]{Example}
\newtheorem*{example*}{Example}
\numberwithin{thm}{section}
\numberwithin{equation}{section}
\newcommand{\R}{\mathbb{R}}
\newcommand{\Q}{\mathbb{Q}}
\newcommand{\Z}{\mathbb{Z}}
\newcommand{\F}{\mathbb{F}}
\newcommand{\A}{\mathbb{A}}
\newcommand{\GL}{\text{GL}}
\newcommand{\calA}{\mathcal{A}}
\newcommand{\calF}{\mathcal{F}}
\newcommand{\calM}{\mathcal{M}}
\newcommand{\calC}{\mathcal{C}}
\newcommand{\calR}{\mathcal{R}}
\newcommand{\calH}{\mathcal{H}}
\newcommand{\calU}{\mathcal{U}}
\newcommand{\calO}{\mathcal{O}}
\newcommand{\calN}{\mathcal{N}}
\newcommand{\bP}{\mathbb{P}}
\newcommand{\bx}{\mathbf{x}}
\newcommand{\bd}{\mathbf{d}}
\newcommand{\ba}{\mathbf{a}}
\newcommand{\bm}{\mathbf{m}}
\newcommand{\br}{\mathbf{r}}
\newcommand{\bc}{\mathbf{c}}
\newcommand{\bD}{\mathbf{D}}
\newcommand{\fp}{\mathfrak{p}}
\newcommand{\fa}{\mathfrak{a}}
\newcommand{\Pic}{\text{Pic}}
\newcommand{\rk}{\text{rk}}
\newcommand{\Ind}{\text{Ind}}
\newcommand{\ord}{\text{ord}}
\newcommand{\SL}{\text{SL}}
\newcommand{\Aut}{\text{Aut}}
\newcommand{\Gal}{\text{Gal}}
\title{Counting points on a family of degree one del Pezzo surfaces}
\author[K. Woo]{Katharine Woo}
\address{Department of Mathematics, Stanford University, Stanford, CA 94305}
\email{katywoo@stanford.edu}
\date{\today}
\begin{document}

\maketitle

\begin{abstract}

    We study the rational points on the elliptic surface given by the equation:
    $$y^2 = x^3 + AxQ(u,v)^2 + BQ(u,v)^3,$$
    where $A,B\in \mathbb{Z}$ satisfy that $4A^3-27B^2\neq 0$ and $Q(u,v)$ is a positive-definite quadratic form. We prove asymptotics for a special subset of the rational points of increasing height, specifically those that are integral with respect to the singularity. This method utilizes Mordell's parameterization of integral points on quadratic twists on elliptic curves, which is based on a syzygy for invariants of binary quartic forms. 
    
    Let $\mathcal{F}(A,B)$ denote the set of binary quartic forms with invariants $-4A$ and $-4B$ under the action of $\textrm{SL}_2(\Z)$. We reduce the point-counting problem to the question of determining an asymptotic formula for the correlation sums of representation numbers of binary quadratic and binary quartic forms, where the quartic forms range in $\mathcal{F}(A,B)$. These sums are then treated using a connection to modular forms. 
\end{abstract}

\section{Introduction}\label{sec: intro}
In 2002, Swinnerton-Dyer \cite{Swinnerton-Dyer} wrote that 
\begin{quote}
    It is known that Del Pezzo surfaces of degree $d > 4$ satisfy the Hasse principle and weak approximation; indeed those of degree 5 or 7 necessarily contain rational points. Del Pezzo surfaces of degree 2 or 1 have no aesthetic merits and have attracted little attention; it seems sensible to ignore them until the problems coming from those of degrees 4 and 3 have been solved.
\end{quote}
In this paper, we will attempt the non-sensible task of studying the rational and integral points on certain singular del Pezzo surfaces of degree one; specifically, our surfaces will be elliptic surfaces that parameterize certain families of quadratic twists of elliptic curves. 

Let $A,B\in \Z$ be integers satisfying that $4A^3-27B^2 \neq 0$ and let $Q(u,v)$ be a positive-definite binary quadratic form. We will study the singular del Pezzo surfaces of degree one defined by the equation: 
$$S_{A,B,Q}:= \{y^2= x^3 + AxQ(u,v)^2 + BQ(u,v)^3\}.$$
These surfaces live in weighted projective space $\bP(2,3,1,1)$; thus, the rational points are the integral points $(x,y,u,v) \in S_{A,B,Q}(\Z)$ satisfying that $\gcd(x,u,v) = 1$ (which implies that $\gcd(x,y,u,v)=1$ for this surface.)

We define the height function on $S_{A,B,Q}$ as the restriction of the natural height on $\bP(2,3,1,1):$ $$\tilde{h}((x,y,u,v)) = \max(|x|^{1/2},|y|^{1/3},|u|,|v|).$$
The following refinement of Manin's conjecture specialized for the surface $S_{A,B,Q}$ predicts the count for rational points on $S_{A,B,Q}$ over the fixed number field $\Q$; we present the version given in \cite[Conjecture 2.3]{Browning-book}: 
\begin{conjecture}[Batyrev-Manin {\cite[Conjecture C']{BatyrevManin}}]\label{conj: Manin}
Define the open sets $U_1 = \bP(2,3,1,1)\backslash \{y=0\}$ and $U_2 = \bP(2,3,1,1) \backslash \{Q(u,v)=0\}.$ Define the point-counting function $$\Tilde{N}(T) := \#\{(x,y,u,v) \in S_{A,B,Q}(\Z)\cap U_1\cap U_2: \tilde{h}((x,y,u,v))\leq T, \gcd(x,u,v)=1\}.$$ Assume that $S_{A,B,Q}(\Q)\neq \emptyset.$
Then there exists a constant $C_{A,B,Q}$ such that $$\Tilde{N}(T) \sim C_{A,B,Q}T\log(T)^{\varrho_{A,B,Q} -1},$$
where $\varrho$ is the rank of the Picard group over $\Q$ of the desingularization $\Tilde{S}_{A,B,Q}$ of $S_{A,B,Q}$. 
\end{conjecture}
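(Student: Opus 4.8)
\textbf{Step 1: the fibration and the exponent $\varrho_{A,B,Q}$.} The plan is to attack Conjecture~\ref{conj: Manin} by combining the geometry of this isotrivial elliptic surface with Mordell's quartic parameterization and a modular-forms evaluation of the resulting correlation sums; I would begin by making explicit the elliptic fibration $\pi\colon\Tilde{S}_{A,B,Q}\to\bP^1$, $[x:y:u:v]\mapsto[u:v]$. Every fibre is the quadratic twist $E^{(Q(u,v))}$ of $E\colon y^2=x^3+Ax+B$, so the surface is isotrivial; since $Q$ is positive-definite, the only degenerate fibres sit over the two complex-conjugate zeros of $Q$ in $\bP^1(\overline{\Q})$, each of Kodaira type $I_0^{*}$ (additive, $v(\Delta)=6$), whence $\chi(\Tilde{S}_{A,B,Q})=12$: a rational elliptic surface with $\rk\Pic_{\overline{\Q}}=10$ and Mordell--Weil rank $0$. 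I would then compute $\varrho_{A,B,Q}$ by Shioda--Tate after passing to $\Gal(\overline{\Q}/\Q)$-invariants. The Galois action factors through $\Gal(LK/\Q)$, where $K=\Q(\sqrt{\operatorname{disc}(Q)})$ swaps the two $I_0^{*}$ fibres and $L$ is the splitting field of $x^3+Ax+B$, whose image in $S_3$ permutes the three non-identity leaves of each $\tilde{D}_4$ configuration (the identity and central components being defined over $\Q$, the former because the zero-section is rational). A short invariant-theoretic count then gives $\varrho_{A,B,Q}=4$ for generic $A,B,Q$, and $5$ or $6$ in the degenerate cases where $x^3+Ax+B$ has a rational root or splits completely; so Conjecture~\ref{conj: Manin} generically predicts $\Tilde{N}(T)\sim C_{A,B,Q}\,T(\log T)^{3}$. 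I would also record the feature that in the $\bP(2,3,1,1)$-model the fibres over $U_2$ are \emph{affine}, so the zero-section is not in the counted locus and no section accumulates --- this is exactly why the predicted order is $T\log^{\varrho-1}T$ rather than $T^{2}$.

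\textbf{Step 2: reduction to correlation sums via Mordell's parameterization.} By the normalization recalled in the introduction, a rational point of $S_{A,B,Q}$ in $U_1\cap U_2$ is exactly a tuple $(x,y,u,v)\in\Z^{4}$ with $Q(u,v)>0$, $y\neq 0$ and $\gcd(x,u,v)=1$; writing $d=Q(u,v)$, the pair $(x,y)$ is an integral point of $C_d\colon y^2=x^3+Ad^2x+Bd^3$, which is $\Q$-isomorphic to the twist $E^{(d)}$. A direct height computation gives $\tilde{h}((x,y,u,v))\asymp\max(|x|^{1/2},|u|,|v|)$, so the region $\tilde{h}\le T$ becomes $|u|,|v|\le T$ together with $|x|\le T^{2}$, i.e.\ points of canonical height $\lesssim\log T$ on $E^{(d)}$ with denominators dividing $d$. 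I would then invoke Mordell's parameterization of integral points on quadratic twists: they are in bijection with $\SL_2(\Z)$-orbits of pairs $(g,(s,t))$, where $g$ ranges over $F(A,B)$ --- binary quartic forms of invariants $-4A,-4B$ --- and $(s,t)\in\Z^{2}$ realizes $d$ (up to bounded factors) as a value of $g$ or of one of its classical covariants, the syzygy relating the invariants $I(g),J(g)$ to $\operatorname{disc}(E)$ being precisely what makes this a \emph{parameterization} rather than merely a partition. Summing over $d\le T^{2}$ with the multiplicity $r_Q(d)=\#\{(u,v):Q(u,v)=d\}$ --- sorted by $\gcd(u,v)$ to accommodate the coprimality condition and the imprimitive $d$ --- then turns $\Tilde{N}(T)$ into a finite sum over $g\in F(A,B)$ of correlation sums of the representation numbers $r_Q$ and $r_g$ over the box above, plus bounded corrections from torsion and from the boundary strata $U_1^{c}\cup U_2^{c}$.

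\textbf{Step 3: modular forms and the main term.} The representation numbers $r_Q(n)$ are the Fourier coefficients of the weight-one theta series $\theta_Q$, which splits into an Eisenstein part and, when the genus of $Q$ is nontrivial, a dihedral cusp form attached to $K$. On the quartic side, the classical theory of binary quartics ties the generating series assembled from $\{r_g\}_{g\in F(A,B)}$ to the $2$-division field $L$ of $E$ (its resolvent cubic pins down $L$, and the quadratic resolvent pins down $\Q(\sqrt{\operatorname{disc}(E)})$), so this series is governed by an automorphic form on $\GL(2)$ built from Hecke characters of $L$ and $\Q(\sqrt{\operatorname{disc}(E)})$. I would evaluate the correlation sums by a Rankin--Selberg type unfolding of $\theta_Q$ against this $\GL(2)$-object: the product of their Eisenstein components produces the main term $C_{A,B,Q}\,T(\log T)^{\varrho_{A,B,Q}-1}$, the number of $\log$-factors matching the Shioda--Tate count of Step 1, while the cuspidal components must be shown to contribute strictly less. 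Here the appendix is indispensable: its main theorem --- that sums of absolute Hecke eigenvalues of a $\GL(2)$-representation tempered at all finite places enjoy logarithmic savings exactly when the representation is cuspidal --- is what pushes every non-Eisenstein term below the main term, and the base-change statement for $\GL(2)$ is what lets one compare the $Q$-side (naturally living over $K$) with the quartic side (living over $L$). Finally, unwinding the local measures in the unfolding should identify $C_{A,B,Q}$ with the constant of Peyre's refinement (a Tamagawa number times the volume of the effective cone), closing the loop with Conjecture~\ref{conj: Manin}.

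\textbf{The main obstacle.} The decisive difficulty --- and the reason the full conjecture, as opposed to the subfamily of points ``integral with respect to the singularity'' that the body of the paper can treat, is not reached --- is uniformity on two fronts. First, Mordell's parameterization only sees integral points of $C_d$ that are coprime to $d$; points with $\gcd(x,d)>1$ descend from the twists $E^{(d/m^{2})}$ and must be folded back in, which amounts to controlling $\rk E^{(e)}(\Q)$ --- really, the number of integral points on $C_e$ --- uniformly as $e$ ranges over the thin, congruence-obstructed set of values of the fixed form $Q$. Second, making the $\GL(2)$ cancellation of Step 3 effective requires the cuspidal estimate of the appendix with a genuine power saving, uniform over all newforms that occur and over the finitely many quartic classes in $F(A,B)$ --- in effect a simultaneous base-change/dihedral description of the quartic generating series over both $K$ and $L$. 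It is this uniform analytic control of the correlation sums, rather than any single geometric input, on which the whole argument turns.
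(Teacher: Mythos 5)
The statement you are trying to prove is Conjecture~\ref{conj: Manin}; it is stated in the paper as an open conjecture and is \emph{not} proved there. What the paper actually establishes (Theorem~\ref{thm: Manin}) is an asymptotic for the strictly smaller set of points that are integral with respect to the singularity, i.e.\ those with $\gcd(x,Q(u,v))=1$, and your Steps 2--3 are essentially a sketch of that argument (Mordell/Duke parameterization via the Cayley--Hermite syzygy, decomposition of $r_Q$ into Eisenstein and cuspidal parts, and the Appendix's base-change criterion for logarithmic savings). As you yourself concede in your final paragraph, the passage from that restricted count to the full conjecture --- folding in the points with $\gcd(x,Q(u,v))>1$, which requires quartic forms with invariants ranging over the thin set $\{(\lambda^2 A,\lambda^3 B)\}$ --- is not carried out, so this is a strategy outline rather than a proof.

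Beyond that structural gap there are two concrete errors. First, your Picard rank count is wrong: the paper computes $\varrho_{A,B,Q}\in\{3,4,5\}$ (generically $3$), governed by the factorization of $z^3 + A\Delta(Q)z + B\Delta(Q)^{3/2}$ over $K_Q$, not of $x^3+Ax+B$ over $\Q$. Your Shioda--Tate count on the rational elliptic surface ($\rk\Pic_{\overline\Q}=10$) is systematically one too large, because the variety relevant to Manin's conjecture is the degree-one del Pezzo surface, whose minimal desingularization has $\rk\Pic_{\overline\Q}=9$; the elliptic surface is a further blow-up at the base point of the anticanonical pencil. So the predicted exponent is generically $(\log T)^2$, not $(\log T)^3$. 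Second, your claim in Step~3 that the Eisenstein components of the correlation sums produce a main term of order $T(\log T)^{\varrho_{A,B,Q}-1}$ ``matching the Shioda--Tate count'' is false and is contradicted by the paper's own examples: the syzygy-parameterized points form a thin set and contribute only $T(\log T)^{\beta}$ with $\beta=\sum_i \mathbf{1}_{K_Q\subset K_{F_i}}\le 2$, e.g.\ $T(\log T)^2$ for $S_{-1,0,u^2+v^2}$ against a conjectural $T(\log T)^4$, and $T$ against $T(\log T)^3$ for $S_{-1,0,u^2+5v^2}$. There is no mechanism in your argument forcing these two exponents to agree, and in general they do not; this is precisely why the paper proves only lower bounds toward the conjecture.
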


 This constant $C_{A,B,Q}$, known as Peyre's constant \cite{PeyreConstant}, can be expressed in terms of a product of local densities, the Brauer group, and the volume of a certain polytope in $\Pic(\tilde{S}_{A,B,Q})_{\R};$ we will not focus on this constant in this paper. Note that $C_{A,B,Q}$ is not guaranteed to be nonzero in the above prediction. 

 The exponent in the prediction can be explicitly calculated as well for $S_{A,B,Q}$. Let $\Delta(Q)$ be the discriminant of $Q(u,v)$ and $K_Q = \Q(\sqrt{\Delta(Q)}) = \Q[x]/(Q(x,1))$. In \S\ref{sec: Picard}, we compute that the Picard rank $\varrho_{A,B,Q}$ satisfies that \begin{equation}\label{eq: pic rank}\varrho_{A,B,Q} = \begin{cases}
    5, & z^3 - A\Delta(Q) z + B\Delta(Q)^{3/2} \text{ splits completely over }K_Q, \\
    4, & z^3 - A\Delta(Q) z + B\Delta(Q)^{3/2} \text{ splits into a linear and quadratic factor over }K_Q,\\
    3, & \text{otherwise. }
\end{cases}\end{equation}
Hence, for a generic $A,B$ and $Q(u,v)$, the conjecture asserts $$\Tilde{N}(T) \sim c_{A,B,Q} T\log(T)^2.$$

While asymptotics for Manin's conjecture have not been established for any del Pezzo surface of degree one, both upper and lower bounds have been explored. Munshi uses sieve methods to investigate in \cite{Munshi-dp1-bound} singular surfaces of the form $\{y^2 = (x+\lambda Q(u,v))(x+\delta Q(u,v))(x+\overline{\delta}Q(u,v))\}$, where $\lambda\in \Z$, $\delta$ generates an imaginary quadratic field, and $Q(u,v)$ is a positive-definite quadratic form. He bounds the number of rational points of height up to $T$ by $\ll_\epsilon T^{5/4+\epsilon}$. In \cite{LeBoudec-dp1-bound}, Le Boudec improves this bound to $\ll_\epsilon T^{1+\epsilon}$ for surfaces of the form $\{y^2=(x-e_1Q(u,v))(x-e_2Q(u,v))(x-e_3Q(u,v))\}$, where $e_1,e_2,e_3\in \Z$ are distinct. 

For nonsingular del Pezzo surfaces of degree one, which are elliptic surfaces, one can use the bounds derived by Bhargava, Shankar, Taniguchi, Thorne, Tsimerman, and Zhao \cite{ECrankBhargavaetal} and Helfgott and Venkatesh \cite{ECrankHelfgottVenkatesh} for integral points on elliptic curves to establish nontrivial upper bounds of size $O(T^{2.87+\epsilon})$. Additionally, motivated by an observation of Heath-Brown in \cite{HBCubic}, Bonolis and Browning explain in \cite{BonolisBrowning} how to achieve an upper bound of $O_\epsilon(T^{2+\epsilon})$ using a suitable rank growth hypothesis for elliptic curves. For lower bounds, Frei, Loughran, and Sofos \cite{FreiLoughranSofos} established a lower bound of the correct order of magnitude for del Pezzo surfaces of degree one after passing to an extension of $\Q$ of degree at most 138240. 

The lowest degree del Pezzo surface for which Manin's conjecture is known is a singular del Pezzo surface of degree two studied by Baier and Browning in 2013 \cite{Baier-Browning-delPezzodegree2}. For higher degrees, more is known. In \cite{delaBrowning-Manin-degree-4-2} and \cite{delaBrowning-Manin-degree-4}, de la Bret\`eche and Browning derive Manin asymptotics for certain del Pezzo surfaces of degree four. 
For more known cases of Manin's conjecture, especially for higher degrees, we refer to \cite{Browning-Survey} and \cite{Browning-book}.

\medskip

For our surface, we will count a specific subset of the rational points. 
\begin{defn}
    A rational point $(x,y,u,v) \in S_{A,B,Q}(\Z)$ is called \textbf{integral with respect to the singularity} if $\gcd(x,y,Q(u,v))=1$. From the structure of $S_{A,B,Q}$, the condition $\gcd(x,y,Q(u,v))=1$ is equivalent to $\gcd(x,Q(u,v))=1$.  \footnote{Another interpretation of this $\gcd$ condition is that $(x,y,u,v)$ is integral with respect to the singularity if and only if $(x,y,u,v)\in S_{A,B,Q}(\Z/p^k\Z)$ is nonsingular for every prime power $p^k$.}
\end{defn}
To see that this gcd condition is connected to the singularity of $S_{A,B,Q}$, we recall that the singularity of $S_{A,B,Q}$ is given by $x=y=Q(u,v)=0$. By viewing the singularity as a boundary divisor, we can view these points in the framework developed by Derenthal and Wilsch \cite{DerenthalWilsch} and by Chambert-Loir and Tschinkel \cite{ChambertLoirTschinkel} for rational points that are integral with respect to boundary divisors. In both of the aforementioned papers, and the later work of Santens \cite{Santens}, conjectures are formulated for counting integral points with respect to boundary divisors under certain geometric assumptions (for example, split log-Fano varieties) that $S_{A,B,Q}\backslash\{x=y=Q(u,v)=0\}$ does not satisfy. 
Furthermore, it is both expected, and verified through our parameterization of the integral points, that the set of integral points with respect to the singularity is contained in a thin set. It is also of note that in many formulations of Manin's conjecture for integral points one excludes a thin set, so it is unclear what the expected asymptotic should be for these points. We, however, expect that a modification of this method will produce a lower bound towards Manin's conjecture, unless there are obstructions to rational points on certain fibers; this will be explained further in Remark \ref{rem: all rational points}. \\

We must further restrict which integers $A$, $B$, and positive-definite quadratic forms $Q(u,v)$ we study; this assumption comes from the automorphic techniques in our method for counting points. Let $\mathcal{F}(A,B)$ denote the finite set of binary quartic forms with invariants $-4A$ and $-4B$, up to equivalence by $\SL_2(\Z)$. 

\begin{defn} 
    Let $A,B\in \Z$ and $Q(u,v)$ be a positive-quadratic form. The tuple $(A,B,Q(u,v))$ is called \textbf{disassociated} if the quadratic extension determined by $Q(u,v),$ denoted as $K_Q$ is not contained in the splitting field of $F$, for all $F\in \mathcal{F}(A,B).$
\end{defn}
 Finally, for simplicity, we define a slightly different height function: $$h((x,y,u,v)) = \max(|x|^{1/2}, |y|^{1/3}, |Q(u,v)|^{1/2}). $$This height function is within a scalar multiple of the natural height function $\tilde{h}((x,y,u,v))$ coming from weighted projective space. 
\begin{thm}\label{thm: Manin}
    Let $A,B\in \Z$ such that $4A^3-27B^2 \neq 0$ and $Q(u,v)$ be a positive-definite quadratic form. Assume that $(A,B,Q(u,v))$ is disassociated. Let $U_1 = \bP(2,3,1,1)\backslash\{y=0\}$ and $U_2 = \bP(2,3,1,1)\backslash \{Q(u,v)=0\}.$ Define the point-counting function $$N(T):= \#\{(x,y,u,v)\in S_{A,B,Q}(\Z)\cap U_1\cap U_2: h((x,y,u,v))\leq T, \gcd(x,Q(u,v))=1\}.$$
    Then the following asymptotic holds as $T\rightarrow\infty$: $$N(T) = c_{A,B,Q}T+O_{A,B,Q}(T\log(T)^{-10^{-8}}).$$
\end{thm}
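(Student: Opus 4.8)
The plan is to parameterize the points counted by $N(T)$ using Mordell's classical description of integral points on quadratic twists of elliptic curves, and then reduce the count to an average of correlation sums of representation numbers of binary quadratic and binary quartic forms, which we then estimate via automorphic methods.

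First, I would unpack the structure of the problem. Fixing a coprime representation $Q(u,v) = n$ (so $(u,v)$ is counted with multiplicity $r_Q(n)$, the number of such representations), the condition $y^2 = x^3 + Anx^2\cdot(n) + Bn^3$ — more precisely $y^2 = x^3 + A n^2 x + B n^3$ after the substitution — with $\gcd(x,n)=1$ describes integral points on the quadratic twist $E_n: y^2 = x^3 + An^2 x + Bn^3$ of the fixed curve $E: y^2 = x^3 + Ax + B$. Mordell's parameterization, resting on the syzygy among the invariants $I,J$ of binary quartic forms, expresses such integral points in terms of binary quartic forms $F$ with invariants proportional to $A$ and $B$ (i.e. $F \in \mathcal{F}(A,B)$) together with an integer represented by the associated quadratic covariant of $F$ evaluated at a point — schematically, $x$ and $n$ are both essentially values of binary forms built from a quartic $F \in \mathcal{F}(A,B)$ at an integer point $(s,t)$. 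This converts $N(T)$ into a sum over $F \in \mathcal{F}(A,B)$ of
\[
  \sum_{\substack{(s,t)\\ F\text{-height} \leq T}} r_Q\bigl(G_F(s,t)\bigr),
\]
where $G_F$ is the relevant binary quadratic covariant, subject to suitable coprimality and positivity constraints, with the height condition $h \leq T$ translating into a region in the $(s,t)$-plane of area $\asymp T^2$ after accounting for the degree-$4$ scaling. [Here I would need to be careful with the dictionary between the three coordinates $x,y,Q(u,v)$ and the invariant-theoretic data, and to handle the finitely many "sporadic" solutions and the behavior near the excluded loci $U_1, U_2$.]

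Next, I would reorganize this as a correlation sum: $r_Q(G_F(s,t))$ counts lattice points, so summing over $(s,t)$ in a homogeneously expanding region is a shifted-convolution-type problem for the product of the theta-like generating function attached to $Q$ and the generating function attached to the values of $G_F$. I would pass to the associated Dirichlet series / automorphic picture: both $r_Q$ and the counting function for $G_F$-values are governed by $\mathrm{GL}(2)$ automorphic objects (Eisenstein series and, crucially, cusp forms — Hecke characters of $K_Q$ and of the field cut out by $F$). The main term $c_{A,B,Q}T$ arises from the Eisenstein/principal contributions, matching Peyre's prediction; the error term comes from the cuspidal contributions. This is exactly where the \textbf{disassociated} hypothesis enters: it guarantees that $K_Q$ is not contained in the splitting field of any $F \in \mathcal{F}(A,B)$, so the relevant $\mathrm{GL}(2)\times\mathrm{GL}(2)$ Rankin–Selberg convolutions do not degenerate — there is no "diagonal" cuspidal term that would contribute a secondary main term or obstruct cancellation — and one is in the genuinely cuspidal regime handled by the Appendix.

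The main obstacle — and the technical heart of the argument — is obtaining the logarithmic saving $\log(T)^{-10^{-10}}$ in the error term. This requires the Appendix's analysis of sums $\sum_{n \leq X} |\lambda_\pi(n)|$ (and their analogues along the values of $G_F$) for $\mathrm{GL}(2)$ representations tempered at all finite places: such sums save a power of $\log$ over the trivial bound precisely when $\pi$ is cuspidal, and one must feed this into the correlation sum uniformly over the finitely many forms $F \in \mathcal{F}(A,B)$ and the finitely many residue classes imposed by the coprimality conditions. Assembling the main term constant $c_{A,B,Q}$ as a convergent product of local densities, verifying it is consistent with (a thin-set-adjusted version of) Peyre's constant, and carefully tracking all the $O_{A,B,Q}$-dependencies through the Mordell parameterization are the remaining substantial bookkeeping tasks; I expect the delicate point there to be the interplay between the gcd condition $\gcd(x,Q(u,v))=1$ and the integrality constraints on the quartic form $F$.
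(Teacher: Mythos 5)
Your overall strategy is the paper's: parameterize the points integral with respect to the singularity via the Cayley--Hermite syzygy and Mordell's/Duke's bijection, reduce to correlation sums of $r_Q$ against values of binary forms attached to the finitely many classes in $\calF(A,B)$, and use the disassociated hypothesis to force all the exponents $\beta_{F,Q}$ to vanish so that the Eisenstein part gives the main term and the cuspidal part (handled by the Appendix) is absorbed into the error. However, there is a concrete error in your dictionary that breaks the order of magnitude of the main term. In the parameterization, the triple is $(x,y,n)=(-H_F(\bm),\tfrac12 T_F(\bm),F(\bm))$: the value $n=Q(u,v)$ is the value of the binary \emph{quartic} $F$ itself (degree $4$), not of a ``binary quadratic covariant'' $G_F$ as you write ($H_F$ is a quartic and $T_F$ a sextic; there is no quadratic covariant in play). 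Consequently the height condition $\max(|x|^{1/2},|y|^{1/3},|F(\bm)|^{1/2})\le T$ cuts out, by homogeneity of degrees $4,6,4$, a region $T^{1/2}\calR_F$ of area $\asymp T$ --- not a region of area $\asymp T^2$ as you claim. Applying Theorem \ref{thm: correlation} with $X=T^{1/2}$ then yields a main term $\asymp X^2=T$, consistent with the asserted asymptotic $N(T)\sim c_{A,B,Q}T$; with your reading (a quadratic form summed over a region of area $T^2$) the count would come out of size $\asymp T^2$, which contradicts the statement being proved.

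Two smaller points. First, passing from the bijection on $\mathscr{F}(A,B)/\Gamma_\infty$ to a finite sum over $\SL_2(\Z)$-classes $\calF(A,B)$ is not automatic: one must insert the admissibility congruence conditions $R_F$ modulo $\prod_{p\mid\Delta(F)}p$ and weight by $1/\#\Aut(F)$ (Lemma \ref{lem: expression for v(n,T)}); your ``finitely many residue classes'' remark gestures at this but the congruence restriction is exactly why Theorem \ref{thm: correlation} is stated with the condition $(x,y)\equiv\alpha\bmod M$. Second, the paper deliberately does not claim the constant $c_{A,B,Q}$ matches Peyre's constant --- the points counted form a thin set, and Conjecture \ref{conj: Manin} predicts a different power of $\log(T)$ for the full rational point count --- so ``matching Peyre's prediction'' is not part of the argument and should not be asserted.
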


\begin{remark}
The assumption that $(A,B,Q(u,v))$ is disassociated is stronger than necessary for our methods to produce as asymptotic for $N(T)$. Let $\calH$ denote the Hilbert class field of $K_Q$; let $K_F$ denote the splitting field of a binary form $F$. If we have that either $K_Q\not\subset K_F$ or $K_Q\subset K_F$ and $\calH\cap K_F = K_Q$, then there exists an explicit exponent $0\leq \beta_{A,B,Q}\leq 2$ such that 
$$N(T) = c_{A,B,Q}T\log(T)^{\beta_{A,B,Q}}+O_{A,B,Q}(T\log(T)^{\beta_{A,B,Q}-10^{-8}}).$$
    Moreover, for a given $A,B$ and $Q(u,v)$, this exponent $\beta_{A,B,Q}$ is computable by combining the explicit parameterization given in \S\ref{sec: syzygy} of the integral points and utilizing Mordell's fundamental domain for binary quartics with given invariants (see \cite{Mordell} for explicit bounds). 
\end{remark}

\begin{remark}
    In this paper, we do not give a criteria for when $c_{A,B,Q}$ is nonzero -- in \S\ref{sec: ex and lower bounds} we look at specific examples of surfaces and their respective leading constants. In all cases, lower bounds for Manin's conjecture are established. In \S\ref{subsec: BM example}, we also give an example of a surface when a Brauer-Manin obstruction completely obstructs integral points in one of, but not all of, the fibers -- this phenomenon must be considered in any eventual evaluation of the constant $c_{A,B,Q}.$ 
\end{remark}

The new tool employed in establishing Theorem \ref{thm: Manin} is the use of syzygies. The rational points of $S_{A,B,Q}$ that are integral with respect to the singularity can be parameterized using a syzygy of Cayley \cite{Cayley} and Hermite \cite{Hermite} for binary quartic forms; this syzygy specifically connects binary quartics to the quadratic twists of elliptic curves. This crucial observation was initially made by Mordell \cite{Mordell} during his seminal investigation of the rational points of elliptic curves. Recently, Duke \cite{Duke} provided the quantitative version of this relationship that we will utilize in \S\ref{sec: syzygy}. A loose summary of the parameterization is such: the solutions to the equation $$\{(x,y,n): y^2 = x^3 + Axn^2 + Bn^3, \gcd(x,n)=1\}$$
are given by 
$$\left\{(H_F(\bm), T_F(\bm), F(\bm)): 
     \bm\in \Z^2,
     F\in \mathcal{F}(A,B)
\right\},$$
where $H_F(m,n)$ (resp. $T_F(m,n)$) is a binary quartic (resp. sextic) that is formed with the coefficients of $F$. This parameterization will be further elucidated in \S\ref{sec: syzygy}. We note that this parameterization makes the set of rational points that are integral with respect to the singularity a thin set of type II (as defined by Serre \cite[\S9.7]{SerreBook}); however, we expect that the set of rational points counted in Conjecture \ref{conj: Manin} to not be thin.\\

Our key analytic input will be an asymptotic for correlation sums between binary quartic and binary quadratic forms. First, let $Q(u,v)$ be a positive-definite integral quadratic form. We define $$r_Q(n):= \#\{u,v\in \Z: Q(u,v)=n\}.$$
We want to study the correlations of $r_Q(n)$ with the number of solutions of a binary form. Based on the author's proof of a similar correlation sum in \cite{MyManinChatelet}, we will prove the following theorem: 
\begin{thm}\label{thm: correlation}
    Let $F(x,y)$ be a squarefree binary form of degree $\leq 4$ and $Q(u,v)$ a positive-definite integral quadratic form. Assume that the splitting field of $F(x,y)$, denoted as $K_F$, and the Hilbert class field of $K_Q$ are independent. Let $\calR \subset \R^2$ be a fixed compact subset. Let $\alpha \bmod M$ be a fixed residue class in $(\Z/M\Z)^2$. Then there  exists a constant $c_{\alpha, F,Q}$ such that $$\sum_{\substack{x,y\in X\calR \\ \gcd(x,y)=1\\ (x,y) \equiv \alpha \bmod M}} r_Q(F(x,y)) = c_{\alpha, F,Q} X^2|\calR| \log(X)^{\beta_{F,Q}}+O_{\calR,F,Q,M}(X^2(\log(X))^{\beta_{F,Q}-10^{-8}}),$$
   where we have that $$\beta_{F,Q} = \begin{cases}
        2, & F(x,y) = Q_1(x,y) Q_2(x,y) \text{ where $Q_1,Q_2$ are quadratic and }K_{Q_1}=K_{Q_2}=K_{Q}, \\
        1, & F(x,y) = Q_1(x,y)Q_2(x,y) \text{ where $Q_1,Q_2$ are quadratic and }K_{Q_1}=K_{Q}\neq K_{Q_2},\\
        1, & F(x,y) \text{ is irreducible and }K_Q\subset K_F, \\
        0, & \text{otherwise.}
    \end{cases}.$$
\end{thm}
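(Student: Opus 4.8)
\emph{Proof proposal.} The plan is to separate $r_Q$ into an Eisenstein part, which carries the main term, and a cuspidal part, which is pushed into the error term by the automorphic estimates of the Appendix.

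First I would dispose of the coprimality condition by M\"obius inversion (replacing $F$ by $e^{\deg F}F$ and $X\mathcal{R}$ by $(X/e)\mathcal{R}$, which is routine) and then record the decomposition of $r_Q$. If $\Delta=\Delta(Q)$, then $\sum_{n\ge0}r_Q(n)e(nz)$ is a weight-one form of level $|\Delta|$ and nebentypus $\chi_\Delta$, and Dirichlet's correspondence gives $r_Q(n)=w\cdot\#\{\mathfrak{a}\subseteq\mathcal{O}_{K_Q}:N\mathfrak{a}=n,\ [\mathfrak{a}]=[Q]\}$. Expanding the class indicator over $\widehat{\mathrm{Cl}(K_Q)}$ yields $r_Q=\rho_Q+\lambda_Q$, where $\rho_Q=\frac{w}{h}\sum_{\chi^2=1}\overline{\chi([Q])}\,a_\chi$, $\lambda_Q=\frac{w}{h}\sum_{\mathrm{ord}\,\chi\ge3}\overline{\chi([Q])}\,a_\chi$, and $a_\chi(n)=\sum_{N\mathfrak{a}=n}\chi(\mathfrak{a})$. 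For a genus character $\chi$ one has $a_\chi=\chi_{D_1}\ast\chi_{D_2}$ with $D_1D_2=\Delta$ (so $\rho_Q$ is Eisenstein), while for $\mathrm{ord}\,\chi\ge3$ the form $\theta_\chi$ is a dihedral newform and $a_\chi=\lambda_{\pi_\chi}$ is the Hecke eigenvalue sequence of a cuspidal representation $\pi_\chi$ of $\mathrm{GL}_2/\mathbb{Q}$ that is tempered at every finite place.

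For the main term $\sum \rho_Q(F(x,y))$ I would follow and extend the strategy of \cite{MyManinChatelet}. Writing $a_\chi(F(x,y))=\sum_{d\mid F(x,y)}\chi_{D_1}(d)\chi_{D_2}(F(x,y)/d)$ and splitting the $d$-sum by the hyperbola method at a threshold $D_0$: when $d\le D_0$ one detects $d\mid F(x,y)$ and the class of $F(x,y)/d$ modulo the bounded conductor of $\chi_{D_2}$ by congruences, reducing to counting $(x,y)\in X\mathcal{R}$ in residue classes refining $\alpha\bmod M$ with an error uniform in $d$; when $d>D_0$ one sums over $(x,y)$ first and controls the incomplete character sums by the large sieve and Weil's bound. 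Reassembling produces $c_{\alpha,F,Q}X^2|\mathcal{R}|(\log X)^{\beta_{F,Q}}$ plus $O(X^2(\log X)^{\beta_{F,Q}-10^{-10}})$, where $\beta_{F,Q}$ is the order of the pole at $s=1$ of the associated singular series. Factoring that Dirichlet series through the factor fields $K_{F_i}$ of $F$ and applying Chebotarev reduces the computation of $\beta_{F,Q}$ to deciding which of the Artin $L$-functions built from the characters occurring in $\rho_Q$ (all of them cut from $\Delta$) and the fields $K_{F_i}$ acquire a pole at $s=1$; checking this case by case, and using the independence hypothesis to rule out any unexpected pole coming from the class-field structure of $K_Q$, gives exactly the tabulated values.

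The genuinely new ingredient is the cuspidal part: I must show $\sum_{x,y\in X\mathcal{R},\ \gcd(x,y)=1,\ (x,y)\equiv\alpha}\lambda_{\pi_\chi}(F(x,y))$ is $O(X^2(\log X)^{\beta_{F,Q}-10^{-10}})$ for each $\chi$ with $\mathrm{ord}\,\chi\ge3$. This is precisely what the Appendix addresses: for a representation that is tempered at all finite places, a sum of Hecke eigenvalues along the values of a polynomial has a logarithmic saving over the divisor-function bound, and the saving survives provided the representation stays cuspidal after base change to the relevant field. As $\pi_\chi$ is dihedral attached to $K_Q$, its Artin field lies in the Hilbert class field $\mathcal{H}$ of $K_Q$, and the independence hypothesis ($K_F\cap\mathcal{H}\subseteq K_Q$) forces $\mathrm{BC}_{K_{F_i}}(\pi_\chi)$ to be cuspidal for every $F_i$; hence the Appendix yields $\sum_{x,y}|\lambda_{\pi_\chi}(F(x,y))|\ll X^2(\log X)^{-\delta}$ for a fixed $\delta>0$, which (since $\beta_{F,Q}\ge0$) is absorbed into the stated error.

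The main obstacle is this last step. One needs genuine cancellation — or at least a genuine logarithmic gain for the sum of absolute values — for $\lambda_{\pi_\chi}$ evaluated along the thin, arithmetically rigid set of values of a binary form; this is exactly the content of the Appendix and of its link with base change for $\mathrm{GL}(2)$, and without it one cannot even decouple the cuspidal contribution from the main term. A secondary difficulty lies in the Eisenstein step: the hyperbola method delivers a power saving in $X$ for free, but upgrading it to a \emph{power-of-logarithm} saving requires pushing the divisor and character sums to error terms that are uniform in all of the moduli that appear.
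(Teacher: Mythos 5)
Your overall architecture is exactly the paper's: decompose $r_Q$ via class group characters into a genus-character (Eisenstein) part and an order-$\geq 3$ (cuspidal, dihedral) part; kill the cuspidal part using the Appendix's sieve-plus-base-change bound, with the independence hypothesis $K_F\cap\calH= K_Q$ (or $K_Q\not\subset K_F$) guaranteeing cuspidality of the base change of $\pi_\chi$ to the splitting fields of the factors of $F$; and extract the main term and the exponent $\beta_{F,Q}$ from the pole order of a singular series that factors through $\prod_i\zeta_{K_{F_i}}(s,\chi)$. That is precisely Propositions \ref{prop: cusp} and \ref{prop: Eisenstein} and their synthesis.

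The one place where your proposed tool would not work as stated is the large-divisor range of the Eisenstein sum. For $F$ an irreducible quartic the divisors $d\mid F(x,y)$ range up to size $X^4$, and after the hyperbola/switching step one still has to bound $\sum_{d\sim D}T(\bd;\dots)$ for $D$ between $X^2\log(X)^{-\epsilon}$ and $X^2$ (and the analogous ranges when $F$ has a linear factor) with a saving of a \emph{power of} $\log X$, uniformly enough to sum over dyadic blocks. The large sieve and Weil's bound give power savings in auxiliary moduli but do not produce this logarithmic gain on the divisor-counting function itself; the paper instead imports the Daniel/de la Bret\`eche--Tenenbaum-style large-moduli estimates (Lemmas \ref{lem: large moduli 1} and \ref{lem: large moduli 2}, i.e.\ Propositions 10.1--10.2 of \cite{MyManinChatelet}), which rest on second-moment and divisor-switching arguments specific to binary forms, together with a genuine level-of-distribution statement (Lemma \ref{lem: LoD}) obtained by lattice-point counting rather than by congruence detection alone. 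You correctly flag this as the secondary difficulty, but the resolution is not the tool you name; to complete the argument you would need to verify that those cited propositions apply verbatim once the box is replaced by the convex region $\calR_{\alpha,M}$ and the congruence condition $\alpha\bmod M$ is absorbed into the form, which is what the paper does.
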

\begin{remark}
    Observe that for Theorem \ref{thm: Manin}, by the assumption that $(A,B,Q(u,v))$ are disassociated, all of the relevant exponents $\beta_{F,Q}$ will be zero. 
\end{remark}

 For $G(u,v):=u^2+v^2$, there are many results in the literature covering such correlation sums. Heath-Brown \cite{HB-linear}, and later de la Bret\`eche and Browning \cite{delaBrowning-linear}, analyzed and proved asymptotics when $F(x,y)$ factors into four distinct linear forms $L_1,L_2,L_3,L_4$:
$$\sum_{x,y\leq X} r_G(L_1(x,y))r_G(L_2(x,y))r_G(L_3(x,y))r_G(L_4(x,y)) = c_{L_i,Q} X^2 + O(X^2\log(X)^{-\delta}).$$ Note that using different methods, Matthiesen \cite{Matthiesen} also established the above estimate for $F(x,y)$ a product of arbitrarily many distinct linear factors. Later, in \cite{delaBrowning-quad}, de la Bret\`eche and Browning, and later Destagnol in \cite{Destagnol}, resolved the case when $F(x,y)$ splits into two linear factors and an irreducible quadratic. Finally, in \cite{delaTenenbaum-Manin}, de la Bret\`eche and Tenenbaum solved the hardest case: when $F(x,y)$ is an irreducible binary quartic form. We remark that all of the techniques used in the aforementioned results work for any positive-definite quadratic form of class number one, and all but \cite{Matthiesen} build upon the seminar work of Daniel \cite{Daniel} for divisor functions along binary quartics. 

In \S\ref{sec: correlations}, we rework the tools from \cite{MyManinChatelet} for $Q(u,v)= u^2+\Delta v^2$ to handle an arbitrary positive-definite form and establish Theorem \ref{thm: correlation}. We remark that in combination with the ideas of \cite{MyManinChatelet} for $\Delta<0$, it is possible to replace $Q(u,v)$ with an arbitrary indefinite binary quadratic forms (by adding in the use of argument-detecting Hecke Grossencharacters and restricting to short-ish intervals). For the sake of clarity and simplicity of notation, we do not work out the indefinite case in this paper. This section also uses the work in the \cite{MyBaseChange} on correlation sums of cusp form coefficients along polynomial values and the connection to base change for $\GL(2)$.

\section*{Acknowledgements}
The author would like to thank William Duke for proposing the problem, her advisor Peter Sarnak for his guidance and support, Tim Browning for many enlightening conversations about Manin's conjecture and his helpful feedback, and Dante Bonolis and Florian Wilsch for discussions around integral points with respect to singularities. The author would also like to thank Niven Achenjang, Yijie Dao, Ulrich Derenthal, Amit Ghosh, Trajan Hammonds, Dan Loughran, and Shuntaro Yamagishi for comments on earlier versions of the proof. 

This material is based upon work supported by the National Science Foundation Graduate Research Fellowship Program under Grant No. DGE-2039656. Any opinions, findings, and conclusions or recommendations expressed in this material are those of the author(s) and do not necessarily reflect the views of the National Science Foundation.


\section{Syzygys of binary quartic forms}\label{sec: syzygy}
First, we recall some facts about binary quartics. Then, we will discuss the reduction of Theorem \ref{thm: Manin} to Theorem \ref{thm: correlation} via this parameterization given by syzygys of binary quartic forms. 

\subsection{Binary quartic forms and Mordell's parameterization}\label{subsec: Mordell}
Let us write $$F(x,y) = ax^4+4bx^3y+6cx^2y^2+4dxy^3+ey^4.$$
The space of integral binary quartics $F(x,y)$ can be parameterized by the coefficient variables $a,b,c,d,e.$ 
There is an action of $\SL_2(\Z)$ on the space of integral binary quartics: $$\begin{pmatrix}
    \alpha &\beta \\\gamma &\delta 
\end{pmatrix}\cdot F(x,y) = F(\alpha x+ \beta y, \gamma x+ \delta y).$$
This action then induces an action of $\SL_2(\Z)$ on $\Z[a,b,c,d,e]$, and we can ask for the generators of the ring of invariants: 
$$\Z[a,b,c,d,e]^{\SL_2(\Z)} = \Z[I,J].$$
Here, $I(a,b,c,d,e)$ and $J(a,b,c,d,e)$ are defined as follows:
$$I = ae-4bd+3c^2$$ $$J = ace+2bcd-b^2e -d^2a -c^3.$$
For $F(x,y) = ax^4+4bx^3y+6cx^2y^2+4dxy^3+ey^4$, we will also denote these expressions as $I(F)$ and $J(F)$. 

\begin{example}
    The discriminant polynomial $\Delta(F)$ of a binary quartic form $F(x,y)$ will also be invariant under the action of $\SL_2(\Z)$; so, $\Delta(F) \in \Z[a,b,c,d,e]^{\SL_2(\Z)}$ should have an expression in terms of $I(F)$ and $J(F)$. This is indeed true and the formula is reminscient of the discriminant of an elliptic curve: $$\Delta(F) = I(F)^3 - 27J(F)^2.$$
\end{example}

Next, in order to write down the syzygy between $I(F)$ and $J(F)$, we must define two covariant functions. 
\begin{defn}
    The \textbf{Hessian} of a binary quartic form $F(x,y)$ is the binary quartic form\footnote{Here, $F_x = \frac{\partial F(x,y)}{\partial x}$ and $F_{xy} = \frac{\partial^2 F(x,y)}{\partial x\partial y}$.}: $$H_F(x,y) = \frac{1}{44}\det\begin{pmatrix}
        F_{xx} & F_{xy}\\ F_{yx} & F_{yy}
    \end{pmatrix}.$$
    Given a binary quartic form $F(x,y)$, we call the following function the  \textbf{Jacobian covariant polynomial}: $$T_F(x,y) = \frac{-1}{8} \det\begin{pmatrix}
        F_{x}& F_{y}\\
        H_{F,x} & H_{F,y}
    \end{pmatrix}.$$
    This polynomial $T_F(x,y)$ will be a binary sextic form.
\end{defn}

\begin{lemma}\label{lem: coprime Hessian and quartic}
    If $F(x,y)$ is a squarefree binary quartic form, then $F(x,y)$ and $H_F(x,y)$ will share no common factors over $\overline{\Q}.$
\end{lemma}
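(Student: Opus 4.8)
The plan is to move, by an invertible linear change of variables over $\overline{\Q}$, any hypothetical common root of $F$ and $H_F$ to the point $[0:1]$, and then read off directly from the coefficients of the transformed form that this would force $F$ to have a repeated linear factor, contradicting squarefreeness.

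Concretely, suppose for contradiction that $F$ and $H_F$ share a common factor over $\overline{\Q}$. Since a binary form over an algebraically closed field splits into linear factors, they share a common linear factor, i.e.\ there is a point $[x_0:y_0]\in\bP^1(\overline{\Q})$ with $F(x_0,y_0)=H_F(x_0,y_0)=0$. Pick $g\in\GL_2(\overline{\Q})$ sending $[0:1]$ to $[x_0:y_0]$ and set $\tilde F(x,y):=F\big(g\cdot(x,y)^{\mathsf{T}}\big)$. The input I need here is the transformation law for the Hessian covariant: by the chain rule for Hessian matrices one has $\operatorname{Hess}(\tilde F)=g^{\mathsf{T}}\,(\operatorname{Hess}F\circ g)\,g$, hence $H_{\tilde F}=(\det g)^2\,(H_F\circ g)$. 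Therefore $\tilde F$ and $H_{\tilde F}$ have a common zero at $[0:1]$, while $\tilde F$ is squarefree if and only if $F$ is, so we may assume from the start that $[x_0:y_0]=[0:1]$.

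Writing $\tilde F=ax^4+4bx^3y+6cx^2y^2+4dxy^3+ey^4$, the condition $\tilde F(0,1)=0$ gives $e=0$. Computing the Hessian matrix $\bigl(\begin{smallmatrix}\tilde F_{xx}&\tilde F_{xy}\\ \tilde F_{yx}&\tilde F_{yy}\end{smallmatrix}\bigr)$ at $(x,y)=(0,1)$ yields the matrix with entries $12c,\,12d,\,12d,\,12e$, so $H_{\tilde F}(0,1)$ is a nonzero constant multiple of $ce-d^2=-d^2$; hence $H_{\tilde F}(0,1)=0$ forces $d=0$. But then $\tilde F=x^2(ax^2+4bxy+6cy^2)$, so $x^2\mid\tilde F$, contradicting the squarefreeness of $\tilde F$ (equivalently of $F$).

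This argument is short and amounts to a classical computation; the only point requiring care is tracking the covariance of the Hessian under $\GL_2(\overline{\Q})$ (and observing that the precise normalizing scalar in the definition of $H_F$ is irrelevant to the argument). An alternative route is to invoke the classical resultant identity expressing $\operatorname{Res}(F,H_F)$ as a nonzero constant times a power of the discriminant $\Delta(F)$, so that a common factor of $F$ and $H_F$ would force $\Delta(F)=0$; but the coordinate-change argument above is more elementary and self-contained, so I would present that one.
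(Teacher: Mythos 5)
Your proof is correct, but it takes a genuinely different route from the paper's. The paper argues directly on the factorization: it writes $F=(x+ry)C(x,y)$ for the hypothetical common linear factor, computes the second partials of $F$ in terms of $C$, observes that $(x+ry)\mid H_F$ forces $(x+ry)\mid (C_y(x,y)-rC_x(x,y))^2$, and then expands in the coefficients of $C$ to show that this is precisely the condition $(x+ry)\mid C$, i.e.\ that $F$ has a repeated factor. You instead normalize: you invoke the $\GL_2(\overline{\Q})$-covariance of the Hessian, $H_{F\circ g}=(\det g)^2\,(H_F\circ g)$, to move the common root to $[0:1]$, after which the two vanishing conditions collapse to $e=0$ and $ce-d^2=0$, hence $d=e=0$ and $x^2\mid \tilde F$. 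Your version is shorter and makes the mechanism transparent (the Hessian of $F$ at a root of $F$ detects multiplicity of that root), at the cost of needing the transformation law for $H_F$ -- which the paper never states, but essentially reproves in coordinates modulo $(x+ry)$. A further small advantage of the normalization is that it treats every point of $\bP^1(\overline{\Q})$ uniformly, whereas the paper's parametrization of the common factor as $x+ry$ tacitly omits the factor $y$ (the root $[1:0]$); your argument, and also your suggested alternative via the resultant identity $\mathrm{Res}(F,H_F)$ being a constant times a power of $\Delta(F)$, avoids that case split entirely.
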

The above lemma is proved via a computation, which will be shown in \S\ref{subsec: Mordell proof}.
Finally, we introduce our fundamental syzygy, which is due independently to Cayley \cite{Cayley} and Hermite \cite{Hermite}, that will form the basis of Mordell's parameterization. 
\begin{thm}[Cayley \cite{Cayley}, Hermite \cite{Hermite}]\label{thm: Cayley Hermite syzygy}
    For $F(x,y)$ a binary quartic form, the following relation holds: 
    $$T_F(x,y)^2 = -4H_F(x,y)^3+I(F) \cdot H_F(x,y) F(x,y)^2 - J(F)\cdot  F(x,y)^3.$$
\end{thm}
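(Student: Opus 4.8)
The plan is to prove the identity as an equality of polynomials in $\Z[a,b,c,d,e][x,y]$ by exploiting the fact that every quantity appearing in it transforms covariantly under $\GL_2(\C)\times\mathbb{G}_m$, which reduces the whole statement to a single one‑parameter check. Set
\[
\Phi_F(x,y):=T_F(x,y)^2+4H_F(x,y)^3-I(F)\,H_F(x,y)F(x,y)^2+J(F)\,F(x,y)^3,
\]
a polynomial homogeneous of degree $12$ in $(x,y)$ with coefficients in $\Z[a,b,c,d,e]$; the theorem asserts $\Phi_F\equiv 0$. Since both sides have integer coefficients, it suffices to work over $\C$.

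First I would record the transformation laws. For $g=\left(\begin{smallmatrix}\alpha&\beta\\\gamma&\delta\end{smallmatrix}\right)\in\GL_2(\C)$ and $\tilde F(x,y)=F(\alpha x+\beta y,\gamma x+\delta y)$, the chain rule shows the Hessian matrix of $\tilde F$ transforms by congruence under $g$, contributing a Jacobian factor, so $H_{\tilde F}=(\det g)^2\,(H_F\circ g)$; the analogous determinant manipulation for the Jacobian covariant gives $T_{\tilde F}=(\det g)^3\,(T_F\circ g)$. Combined with the classical weights $I(\tilde F)=(\det g)^4 I(F)$ and $J(\tilde F)=(\det g)^6 J(F)$, a direct check shows that each of the four summands of $\Phi_{\tilde F}$ equals $(\det g)^6$ times the corresponding summand of $\Phi_F\circ g$; likewise, replacing $F$ by $\mu F$ multiplies $H_F,T_F,I(F),J(F)$ by $\mu^2,\mu^3,\mu^2,\mu^3$ respectively, so every summand of $\Phi$ scales by $\mu^6$. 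Hence $\Phi$ is a relative covariant: $\Phi_{\mu(g\cdot F)}(x,y)=\mu^6(\det g)^6\,\Phi_F(g(x,y))$.

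Next I would reduce to a normal form. Over $\C$, a binary quartic with four distinct roots becomes, after a $\GL_2(\C)$‑transformation sending three roots to $0,1,\infty$ and a rescaling, equal to $F_t(x,y)=xy(x-y)(x-ty)$ for some $t\notin\{0,1\}$; that is, its $\GL_2(\C)\times\mathbb{G}_m$‑orbit meets the one‑parameter family $\{F_t\}$. The quartics with a repeated root form the proper Zariski‑closed subset $\{I(F)^3-27J(F)^2=0\}$, so the union of these orbits is Zariski‑dense in the space of all quartics; by the covariance just established it therefore suffices to prove $\Phi_{F_t}\equiv 0$ for every $t$, after which $\Phi_F\equiv 0$ follows because $\Z[a,b,c,d,e,x,y]$ is a domain. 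This last step is mechanical: $H_{F_t}$ and $T_{F_t}$ are explicit forms with coefficients in $\Q[t]$, $I(F_t),J(F_t)\in\Q[t]$ are explicit, and one matches the two sides coefficient‑by‑coefficient in $\Q[t][x,y]$. (As a consistency check, for $F=x^4+y^4$ one has $I=1$, $J=0$, $H_F=x^2y^2$, $T_F=-xy(x^4-y^4)$, and both sides of the asserted identity equal $x^2y^2(x^4-y^4)^2$.)

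I expect the only substantive obstacle to be the bookkeeping in the covariance step — confirming that $T_F^2$, $H_F^3$, $I(F)H_F F^2$, and $J(F)F^3$ all carry the same total weight and order under $\GL_2(\C)\times\mathbb{G}_m$ — since it is precisely this matching that makes $\Phi$ a genuine relative covariant and licenses the passage from $\{F_t\}$ to all quartics. Everything else is routine: the transformation law of the Hessian under composition is standard, and the final check on $\{F_t\}$ is a one‑variable polynomial identity. If one prefers to sidestep the normal form, one may instead expand $\Phi_F$ directly in $\Z[a,b,c,d,e][x,y]$ and observe that it vanishes — a finite but unilluminating computation — the role of the covariance reduction being merely to shrink that verification to a size one can carry out by hand; in the language of the symbolic method, this relation is the single syzygy exhibiting $\{F,H_F,T_F,I,J\}$ as a complete system of covariants for the binary quartic.
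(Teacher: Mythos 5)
The paper does not prove this statement; it is cited as a classical result of Cayley and Hermite, so there is no in-paper argument to compare against. Your proof is correct and self-contained. The covariance bookkeeping is right: $H_{g\cdot F}=(\det g)^2(H_F\circ g)$ and $T_{g\cdot F}=(\det g)^3(T_F\circ g)$ follow from the congruence transformation of the Hessian matrix and the chain rule for the Jacobian, and together with the weights $4$ and $6$ of $I$ and $J$ each of the four terms $T_F^2$, $H_F^3$, $I(F)H_FF^2$, $J(F)F^3$ acquires exactly $(\det g)^6\mu^6$, so $\Phi$ is a genuine relative covariant; the passage from the one-parameter family $F_t=xy(x-y)(x-ty)$ to all quartics via Zariski density of the nonvanishing locus of $I^3-27J^2$ is also sound, since for each fixed $(x_0,y_0)$ the map $F\mapsto\Phi_F(x_0,y_0)$ is polynomial on $\A^5$. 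The one thing left undone is the actual verification of $\Phi_{F_t}\equiv 0$ in $\Q[t][x,y]$, which you correctly describe as a finite computation; to make the write-up complete you should either carry it out or replace it by the direct expansion you mention. One caution on normalization: the paper's definition of $H_F$ carries a factor $\tfrac{1}{44}$, which is surely a typo for the standard $\tfrac{1}{144}$ (your consistency check $H_{x^4+y^4}=x^2y^2$, $T_{x^4+y^4}=-xy(x^4-y^4)$ uses the standard normalization, and the syzygy as stated is false under the literal $\tfrac{1}{44}$), so it is worth flagging which convention your computation assumes.
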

\medskip

In \cite{Mordell}, Mordell noted that the syzygy above takes the same shape as the quadratic twists of an elliptic curve. In particular, by setting $y'=T_F(x,y)/2$, $x'=-H_F(x,y)$, and $n=F(x,y)$, we get the elliptic curve $$y'^2 = x'^3 - \frac{I(F)}{4} x'n^2 - \frac{J(F)}{4} n^3.$$
From this observation, one can parameterize the solutions of the equation $y'^2 = x'^3 + Ax'n^2 + Bn^3$ using binary quartics. This parameterization first appears in a paper of Mordell \cite{Mordell}; we will introduce the quantitative form by Duke in \cite{Duke}.  

Define $\Gamma_\infty\subset \SL_2(\Z)$ to be the group that fixes the point at $\infty$: $$\Gamma_\infty:=\left\{\begin{pmatrix}
    1 & x \\ 0 & 1 
\end{pmatrix}: x\in \Z\right\}.$$
We also need the notion of an admissible binary quartic form. 
\begin{defn}
    A binary quartic form $F(x,y) = ax^4+4bx^3y+6cx^2y^2 + 4d xy^3 + ey^4$ is \textbf{admissible} if $\gcd(a,b)=1.$
\end{defn}
Finally, we define the space of binary quartic forms with invariants $-4A$ and $-4B:$ 
\begin{multline*}\mathscr{F}(A,B) := \{\text{admissible integral binary quartic forms }F(x,y): \\ I(F) = -4A, J(F)=-4B, \text{ and }F(1,0)>0\}.\end{multline*}
We note that this set is invariant under the action of $\Gamma_\infty$. In fact, without the conditions of admissibility and that $F(1,0)>0$, this space would be invariant under the action of $\SL_2(\Z)$. In Theorem 1.7 of Bhargava and Shankar \cite{BhargavaShankar}, the congruences necessary on $A$ and $B$ so that this set is nonempty are listed.

In the following result, the connection between the set $\mathscr{F}(A,B)$ and the integral points on the affine elliptic surface $y^2=x^3+Axn^2+Bn^3$ (the surface consisting of the quadratic twists of an elliptic curve) is made explicit. 

\begin{thm}[Duke, \protect{\cite[Proposition 2]{Duke}}] \label{thm: Duke parameterization}
Let $A,B,\in \Z$ satisfy that $4A^3-27B^2\neq 0$. Then consider the map 
    $$ \mathscr{F}(A,B)/\Gamma_\infty \rightarrow \{(x,y,n)\in \Z^2\times \Z_+: y^2 = x^3+Axn^2+Bn^3, \gcd(x,n)=1\}$$
    $$F \xrightarrow{\varphi} (-H_F(1,0), \frac{1}{2} T_F(1,0), F(1,0)).$$
    Moreover, this map will be a bijection.
\end{thm}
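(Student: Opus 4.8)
The plan is to build the inverse map explicitly and check the two compositions are the identity. First I would verify that $\varphi$ is well-defined: if $F \in \mathscr{F}(A,B)$ then the Cayley--Hermite syzygy (Theorem \ref{thm: Cayley Hermite syzygy}), evaluated at $(x,y)=(1,0)$, gives
$$\left(\tfrac{1}{2}T_F(1,0)\right)^2 = -H_F(1,0)^3 + \tfrac{I(F)}{4}\cdot(-H_F(1,0))\,F(1,0)^2 - \tfrac{J(F)}{4}\,F(1,0)^3,$$
which on substituting $I(F)=-4A$, $J(F)=-4B$ is precisely $y^2 = x^3 + Axn^2 + Bn^3$ with $(x,y,n) = (-H_F(1,0), \tfrac12 T_F(1,0), F(1,0))$. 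I would then record that $H_F(1,0)$, $T_F(1,0)$, $F(1,0)$ are polynomials in $a,b$ alone (only the top two coefficients of $F$ survive evaluation at $y=0$): explicitly $F(1,0)=a$, $H_F(1,0)$ is a polynomial in $a,b,c$ that actually reduces to something like $-(3b^2-ac)$ up to normalization — I'd compute these constants carefully — and likewise $T_F(1,0)$. Admissibility $\gcd(a,b)=1$ together with these formulas forces $\gcd(x,n)=\gcd(H_F(1,0),a)=1$, and the condition $F(1,0)>0$ gives $n\in\Z_+$; also one must check $\Gamma_\infty$-invariance of the triple, which holds because $\begin{pmatrix}1&t\\0&1\end{pmatrix}$ fixes $(1,0)$ up to the lower row, hence fixes $F(1,0)$, and one checks it fixes $H_F(1,0)$ and $T_F(1,0)$ as well (covariance of $H_F,T_F$ under $\SL_2$).

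Next I would construct the inverse. Given $(x,y,n)$ with $y^2=x^3+Axn^2+Bn^3$ and $\gcd(x,n)=1$, I want a binary quartic $F(t,s)=at^4+4bt^3s+\cdots$ with $a=n$, and with $(b,c,d,e)$ chosen so that $I(F)=-4A$, $J(F)=-4B$, $H_F(1,0)=-x$, $T_F(1,0)=2y$. The key point is that the five coefficients $a,b,c,d,e$ have five "coordinates" naturally attached — $I(F)$, $J(F)$, and the three values $F(1,0), H_F(1,0), T_F(1,0)$ (or rather the leading behavior), with $\Gamma_\infty$ acting on $b$ by translation — so modding by $\Gamma_\infty$ exactly kills the one remaining degree of freedom. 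Concretely: set $a=n$. The formulas $I(F)=-4A$ and $H_F(1,0)=-x$ are two equations; together with $J(F)=-4B$ and $T_F(1,0)=2y$ that is four equations for the four unknowns $b,c,d,e$, and the syzygy guarantees they are consistent once $y^2=x^3+Axn^2+Bn^3$. I would solve this system, showing the solution set is a single $\Gamma_\infty$-orbit: translating $F$ by $\begin{pmatrix}1&t\\0&1\end{pmatrix}$ sends $b\mapsto b+ta$, so with $a=n\ne0$ one can normalize $b$ into a fixed residue class mod $n$, pinning down a unique representative; then $c,d,e$ are determined. Integrality of $b,c,d,e$ is where $\gcd(x,n)=1$ gets used — I'd argue that the denominators appearing when solving the linear/quadratic system are supported on primes dividing $n$, and coprimality of $x$ with $n$ clears them (this is essentially Duke's argument, ultimately going back to Mordell).

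The main obstacle I expect is precisely this integrality and uniqueness-of-orbit step: showing that for every admissible $(x,y,n)$ the resulting $(b,c,d,e)$ can be taken in $\Z$ and that the $\Gamma_\infty$-orbit is unique. This requires a careful prime-by-prime analysis — for $p\mid n$, using $\gcd(x,n)=1$ so $x$ is a unit mod $p$, one inverts the relevant expressions; for the uniqueness one checks that two integral quartics in $\mathscr{F}(A,B)$ with the same image under $\varphi$ differ by an element of $\SL_2(\Z)$ fixing $(1,0)$ projectively, hence by $\pm\Gamma_\infty$, and the sign is fixed by $F(1,0)>0$ and $a=n>0$. Injectivity then reduces to: if $g\in\SL_2(\Z)$ fixes $I,J$ and the triple $(-H_F(1,0),\tfrac12T_F(1,0),F(1,0))$, then $g\in\Gamma_\infty$ — which follows because fixing $F(1,0)=a$ and the covariant values forces $g$ to fix the point $[1:0]$ and act trivially on the relevant jet, leaving only the unipotent translations. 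Surjectivity is the content of the inverse construction above. I would organize the writeup as: (i) $\varphi$ well-defined, (ii) injectivity, (iii) surjectivity via explicit inverse with the integrality lemma, citing \cite{Mordell} and \cite{Duke} for the computational core and \cite{BhargavaShankar} for when $\mathscr{F}(A,B)\ne\emptyset$.
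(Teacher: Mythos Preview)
The paper does not prove this theorem itself; it is quoted directly from \cite[Proposition~2]{Duke} and used as a black box. Your outline---verify $\varphi$ is well-defined via the syzygy, then construct the inverse by solving for the coefficients of $F$ from the data $(x,y,n,A,B)$ and using $\Gamma_\infty$ to normalize $b$ modulo $a=n$---is the standard Mordell--Duke argument and is correct in structure, so there is nothing in-paper to compare against beyond noting that you are reconstructing the cited reference.

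One small correction worth making before you write it up: you first say $H_F(1,0),T_F(1,0)$ are polynomials in $a,b$ alone, then immediately (and correctly) backtrack to $a,b,c$ for $H_F$. In fact $H_F(1,0)=ac-b^2$ and $T_F(1,0)$ depends on $a,b,c,d$. This matters for the inverse construction: to get integral $c$ from $nc-b^2=-x$ you need $x$ to be a square modulo $n$, which follows from $y^2\equiv x^3\pmod n$ together with $\gcd(x,n)=1$ (so that $b\equiv yx^{-1}\pmod n$ works). The integrality and uniqueness-of-orbit step you flag as the main obstacle is indeed where the content lies, and your sketch would need these explicit formulas filled in---together with the verification that the remaining equation $J(F)=-4B$ is forced by the syzygy once the others hold---to be complete.
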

The above result gives us a nice expression for the following counting function for the integer points on the elliptic curve of height at most $T$: 
\begin{equation}\label{eq: counting function def}
    v(n,T):= \#\left\{(x,y)\in \Z^2: \begin{array}{c }y^2=x^3+Axn^2+Bn^3,\\  \gcd(x,n)=1, |x|^{1/2},|y|^{1/3}\leq T,  y\neq 0\end{array}\right\}.
\end{equation}
\begin{corollary}
    Let $A,B\in \Z^2$ satisfy that $4A^3-27B^2\neq 0$. Then we have that $$v(n,T) = \sum_{\substack{F\in \mathscr{F}(A,B)/\Gamma_\infty\\ F(1,0)=n}} \mathbf{1}_{T_F(1,0)\neq 0} \mathbf{1}_{|H_F(1,0)|^{1/2},|T_F(1,0)/2|^{1/3}\leq T}.$$
\end{corollary}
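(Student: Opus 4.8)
The plan is to derive the corollary as an immediate consequence of Duke's parameterization (Theorem~\ref{thm: Duke parameterization}), restricted to the fiber above a fixed $n$. First I would fix $n \in \Z_+$ and isolate, inside the codomain of the map $\varphi$ of Theorem~\ref{thm: Duke parameterization}, the triples whose last coordinate is $n$; this subset is exactly $\{(x,y) \in \Z^2 : y^2 = x^3 + Axn^2 + Bn^3,\ \gcd(x,n)=1\}$, and its $\varphi$-preimage is the set of classes $F \in \mathscr{F}(A,B)/\Gamma_\infty$ with $F(1,0)=n$. Since $\varphi$ is a bijection, restricting it yields a bijection between these two sets, sending such an $F$ to the pair $(x,y) = (-H_F(1,0),\ \tfrac12 T_F(1,0))$.

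Next I would transport the three remaining conditions defining $v(n,T)$ in \eqref{eq: counting function def} across this bijection. Since $x = -H_F(1,0)$ and $y = \tfrac12 T_F(1,0)$, the requirement $y \neq 0$ becomes $T_F(1,0)\neq 0$, the bound $|x|^{1/2}\leq T$ becomes $|H_F(1,0)|^{1/2}\leq T$ (as $|{-H_F(1,0)}| = |H_F(1,0)|$), and $|y|^{1/3}\leq T$ becomes $|T_F(1,0)/2|^{1/3}\leq T$. Consequently $v(n,T)$ equals the number of classes $F \in \mathscr{F}(A,B)/\Gamma_\infty$ with $F(1,0)=n$ for which $\mathbf{1}_{T_F(1,0)\neq 0}\,\mathbf{1}_{|H_F(1,0)|^{1/2},\,|T_F(1,0)/2|^{1/3}\leq T}=1$, which is precisely the claimed sum.

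I do not expect any substantive obstacle: the statement is a direct bookkeeping translation of Theorem~\ref{thm: Duke parameterization}. The only points I would spell out are that the summand is genuinely a function on $\Gamma_\infty$-orbits — which holds because $H_F$ and $T_F$ are $\SL_2(\Z)$-covariants and every element of $\Gamma_\infty$ fixes the point $(1,0)$, so $H_F(1,0)$, $T_F(1,0)$ and $F(1,0)$ are constant along each orbit — and that the indicator $\mathbf{1}_{T_F(1,0)\neq 0}$ removes exactly the fiber points with $y=0$, which is immediate from $y = \tfrac12 T_F(1,0)$.
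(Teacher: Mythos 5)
Your proof is correct and matches the paper's (implicit) argument: the paper states this corollary without proof as an immediate consequence of Theorem~\ref{thm: Duke parameterization}, and your fiber-restriction bookkeeping, together with the observation that $H_F(1,0)$, $T_F(1,0)$, $F(1,0)$ are $\Gamma_\infty$-orbit invariants, is exactly the intended reasoning. Note that your version correctly carries the condition $F(1,0)=n$ on the summation, which is implicit in (indeed, arguably missing from) the displayed sum in the corollary as printed.
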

However, this sum is still not ideal for our application, as it sums over an infinite set $\mathscr{F}(A,B)/\Gamma_\infty$. Instead, we look to the the set of representatives under the action of $\SL_2(\Z)$: $$\calF(A,B) := \{\text{integral binary quartic forms }F(x,y): I(F) = -4A, J(F)=-4B\}/\SL_2(\Z).$$
Mordell \cite{Mordell} showed that the number of equivalence classes $\#\calF(A,B)$ is finite. 
Morally, we want to use the following idea: \begin{equation}\label{eq: moral relation between scrF and calF}\calF(A,B) \times \SL_2(\Z)/\Gamma_\infty \approx \mathscr{F}(A,B) /\Gamma_\infty.\end{equation}
However, the above relation is not quite true; we need to make appropriate adjustments to handle our restriction to admissible forms and the condition that $F(1,0)>0$. 

\begin{defn}
    Fix a binary quartic form $F$. A pair $(m_1,m_2)\in \Z^2$ is \textbf{admissible for $F$} if $$\begin{pmatrix}
        m_1 & *\\ m_2 & *
    \end{pmatrix}\cdot F$$ is an admissible binary quartic form.
\end{defn}
We note that in order for $(m_1,m_2)\in \Z^2$ to be admissible for a fixed binary quartic $F$, it must satisfy that $\gcd(m_1,m_2)=1.$ In fact, it is shown in \cite[Proposition 4]{Duke} that $(m_1,m_2)$ are admissible for $F$ if and only if the pair $(m_1,m_2)$ satisfies $$\gcd(m_1,m_2)=1, \text{ and }\forall p\mid \Delta(F) \text{ prime}, (F(m_1,m_2),H_F(m_1,m_2)) \neq (0,0)\bmod p.$$
We denote the $q = \prod_{p\mid \Delta(F)} p$. Let us denote by $R_F$ the following congruence conditions:
\begin{equation}\label{eq: def of local condition R_F} R_F := \{ \bm \bmod q: \forall p\mid q \text{ prime}, (F(\bm),H_F(\bm))\neq (0,0)\bmod p\}.\end{equation}
We can now make observation (\ref{eq: moral relation between scrF and calF}) rigorous, as was done in \cite[Proposition 4]{Duke}; we repeat Duke's proof in \S\ref{subsec: Mordell proof}.

\begin{lemma}\label{lem: expression for v(n,T)}
    Let $A,B\in \Z^2$ satisfy that $4A^3-27B^2 \neq 0$. Define $\Aut(F)\subset \SL_2(\Z)$ as the automorphism group of $F$. Then we have that $$v(n,T) = \sum_{F\in \calF(A,B)}\frac{1}{\#\Aut(F)} \sum_{\substack{\bm\in \Z^2 \\ \gcd(m_1,m_2)=1\\ \overline{\bm}\in R_F \\ F(\bm)=n}} \mathbf{1}_{T_F(\bm)\neq 0} \mathbf{1}_{|H_F(\bm)|^{1/2},|T_F(\bm)|^{1/3}\leq T}$$
\end{lemma}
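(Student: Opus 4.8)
The plan is to rewrite Duke's parameterization (Theorem~\ref{thm: Duke parameterization} and its corollary, which expresses $v(n,T)$ as the number of $F\in\mathscr{F}(A,B)/\Gamma_\infty$ with $F(1,0)=n$, $T_F(1,0)\ne 0$, and $|H_F(1,0)|^{1/2},|T_F(1,0)/2|^{1/3}\le T$) by sorting the forms $F\in\mathscr{F}(A,B)$ according to their $\SL_2(\Z)$-equivalence class, which we identify with a class in $\calF(A,B)$. The elementary input is the covariance identity $\Phi_{g\cdot F}=\Phi_F\circ g$ for any covariant $\Phi$ of binary quartics and any $g\in\SL_2(\Z)$ (no power of $\det g$ enters, since $\det g=1$); applied with $\Phi$ equal to the identity, to the Hessian $H$, and to the Jacobian covariant $T$, and evaluated at $(1,0)$, it gives $(g\cdot F)(1,0)=F(\bm)$, $H_{g\cdot F}(1,0)=H_F(\bm)$, $T_{g\cdot F}(1,0)=T_F(\bm)$, where $\bm$ is the first column of $g$.

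Next I would set up the combinatorial correspondence. Right multiplication of $g$ by $\Gamma_\infty$ is a column operation fixing the first column of $g$, so $\SL_2(\Z)/\Gamma_\infty$ is identified with the set of primitive vectors $\bm$ (those with $\gcd(m_1,m_2)=1$). Fixing a representative $F$ of a class in $\calF(A,B)$, the forms of $\mathscr{F}(A,B)$ that are $\SL_2(\Z)$-equivalent to $F$ are exactly the admissible $g\cdot F$ with $(g\cdot F)(1,0)>0$; two such agree as forms iff the corresponding $g$'s differ by left multiplication by $\Aut(F)$, and they lie in a common $\Gamma_\infty$-orbit iff their associated vectors differ by the linear action $\bm\mapsto h\bm$ of some $h\in\Aut(F)$. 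Hence the $\Gamma_\infty$-orbits in $\mathscr{F}(A,B)$ lying over the class of $F$ and having value $n$ are in bijection with the primitive vectors $\bm$ such that $g\cdot F$ is admissible and $F(\bm)=n$, taken modulo $\Aut(F)$; as $n$ is a positive integer, the positivity $F(\bm)=(g\cdot F)(1,0)>0$ is automatic.

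It remains to make admissibility explicit and to clear the $\Aut(F)$-quotient. For the former I would invoke \cite[Proposition~4]{Duke}: a primitive $\bm$ completes to some $g$ with $g\cdot F$ admissible if and only if $(F(\bm),H_F(\bm))\not\equiv(0,0)\bmod p$ for every prime $p\mid\Delta(F)$, i.e.\ $\overline{\bm}\in R_F$, a condition on $\bm$ alone; one also notes that the coprimality $\gcd(x,n)=\gcd(H_F(\bm),F(\bm))=1$ imposed in the definition of $v(n,T)$ is already a consequence of admissibility, so it need not be re-imposed. For the latter, the action of $\Aut(F)$ on nonzero vectors is free --- a nontrivial $h\in\Aut(F)$ fixing a nonzero $\bm$ would have $1$ as a double eigenvalue ($\det h=1$), hence be unipotent and, being $\ne I$, of infinite order, contradicting finiteness of $\Aut(F)$ (finite because, $F$ being squarefree, $\Aut(F)$ permutes the four distinct roots of $F$ in $\bP^1(\overline{\Q})$ with kernel $\{\pm I\}$) --- and the quantities $H_F(\bm)$, $T_F(\bm)$, $F(\bm)$ are literally $\Aut(F)$-invariant in $\bm$ (since $F\circ h=F$ forces $H_F\circ h=H_F$ and $T_F\circ h=T_F$); therefore summing the indicators over $\Aut(F)$-orbits of admissible $\bm$ with $F(\bm)=n$ equals $\tfrac{1}{\#\Aut(F)}$ times the sum over all such $\bm$. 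Assembling these steps, using the covariance identities of the first paragraph to rewrite the nonvanishing indicator as $\mathbf{1}_{T_F(\bm)\ne 0}$ and the height truncations as $\mathbf{1}_{|H_F(\bm)|^{1/2},|T_F(\bm)|^{1/3}\le T}$, and summing over the finitely many $F\in\calF(A,B)$, yields the claimed formula.

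The step I expect to be the main obstacle is establishing that admissibility of the $\SL_2(\Z)$-translate $g\cdot F$ is governed entirely by the residue of $\bm$ modulo $q=\prod_{p\mid\Delta(F)}p$ --- equivalently by membership in $R_F$ --- together with carefully tracking $\Aut(F)$ through the reduction from $\SL_2(\Z)$-orbits to pairs (class in $\calF(A,B)$, primitive vector $\bm$) so that the factor $1/\#\Aut(F)$ appears with exactly the right multiplicity. Both are precisely the content of \cite[Proposition~4]{Duke}, whose argument I would follow; everything else is a formal consequence of Duke's bijection and the covariance of $H$ and $T$.
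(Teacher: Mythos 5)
Your proposal is correct and follows essentially the same route as the paper, which likewise derives the formula from Duke's bijection (Theorem~\ref{thm: Duke parameterization}) together with \cite[Proposition~4]{Duke}, identifying $\SL_2(\Z)/\Gamma_\infty$ with primitive vectors, encoding admissibility by the congruence set $R_F$, and accounting for the $\Aut(F)$-multiplicity via the factor $1/\#\Aut(F)$. Your write-up is if anything more explicit than the paper's (the covariance identities, the freeness of the $\Aut(F)$-action on nonzero vectors), but the underlying argument is the same.
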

\begin{remark}
    Note that $\Aut(F)$ is finite for any element $F\in \calF(A,B).$
\end{remark}

\subsection{Reducing Theorem \ref{thm: Manin} to Theorem \ref{thm: correlation}}
Let $A,B\in \Z$ satisfy that $4A^3-27B^2\neq 0$. Then we wish to count $$N(T) = \#\{(x,y,u,v)\in S_{A,B,Q}(\Z): h((x,y,u,v))\leq T, \gcd(x,Q(u,v))=1, y\neq 0\}.$$ We can rewrite this in terms of the function $v(n,T)$ as $$N(T) = \sum_{n\leq T^2} v(n,T)r_Q(n),$$
where $r_Q(n) = \#\{u,v\in \Z: Q(u,v)=n\}$. 
Plugging in Lemma \ref{lem: expression for v(n,T)}, the expression above can be written as 
\begin{align*}N(T) &= \sum_{F\in \calF(A,B)} \frac{1}{\#\Aut(F)} \sum_{n\leq T^2} r_Q(n)\sum_{\substack{\bm\in \calA_F \\ T_F(\bm)\neq 0\\ |H_F(\bm)|^{1/2},|T_F(\bm)/2|^{1/3}\leq T}}\textbf{1}_{F(\bm) = n} \\
&= \sum_{F\in \calF(A,B)} \frac{1}{\#\Aut(F)} \sum_{\substack{\bm\in \calA_F\\ 
T_F(\bm)\neq 0\\ |H_F(\bm)|^{1/2},|T_F(\bm)/2|^{1/3}, |F(\bm)|^{1/2}\leq T}} r_Q(F(\bm)).
\end{align*}

Next, we note that since $F\in \calF(A,B)$, the discriminant $\Delta(F) = 16(4A^3-27B^2)\neq 0$. Thus, we automatically have that our sum is over squarefree binary quartic forms. Applying Lemma \ref{lem: coprime Hessian and quartic} and using that $F,H_F,T_F$ are all homogeneous, we can see that the conditions $$\{\bm\in \R^2:|H_F(\bm)|^{1/2},|T_F(\bm)/2|^{1/3}, |F(\bm)|^{1/2}\leq T, T_F(\bm)\neq 0\} =: T^{1/2}\calR_F$$
 where $\calR_F$ is a compact region; Lemma \ref{lem: coprime Hessian and quartic} ensures that this region will be noncuspidal. Moreover, this region will have boundary satisfying that $|\partial T^{1/2}\calR_F|\ll_F T^{1/2}.$  
 We can rewrite the sum $N(T)$ as $$N(T) = \sum_{F\in \calF(A,B)} \frac{1}{\#\Aut(F)}  \sum_{\substack{\bm\in T^{1/2}\calR_F\\ \gcd(m_1,m_2)=1 \\ \bm\in R_F}} r_Q(F(\bm)).$$
 Here we recall $R_F$ as the set of congruence conditions from (\ref{eq: def of local condition R_F}). So, we apply Theorem \ref{thm: correlation} to each of these sums individually to achieve Theorem \ref{thm: Manin} -- observe that since $(A,B,Q(u,v))$ are disassociated that $\beta_{F,Q}=0$ for all $F\in \calF(A,B)$. Note that $\#\calF(A,B)$ and $\#R_F$ are both finite constants only dependent on $A,B\in \Z$. \qed 

 \begin{remark}\label{rem: all rational points}
     To count all of the rational points instead of ones that are integral with respect to the singularity using the method of syzygies described above, one turns to estimating the weighted sums over binary quartic forms $F(x,y)$ with invariants ranging in a thin set $$\mathcal{I}_{A,B} = (I(F),J(F)) \in \{\lambda^2 A, \lambda^3 B: \lambda \in \Z\}.$$
    Understanding the class number of binary quartic forms , i.e. $\#\calF(A,B)$, as $A$ and $B$ range in $\mathcal{I}_{A,B}$ above seems a challenging problem at the moment. 

     Since we can parameterize all of the rational points using this method of syzygies by letting $I(F)$ and $J(F)$ vary in $\mathcal{I}_{A,B}$, we claim we can always achieve a lower bound towards Manin's conjecture using this method of attack, despite the fact that in Manin's conjecture a thin set might be excluded.  For any given $d$, let 
     \begin{equation*}
         S_{A,B,Q}^d(\Z):= \{(x,y,u,v)\in S_{A,B,Q}(\Z): \gcd(x,y,Q(u,v))=d\}.
     \end{equation*}
     As we range over all $d\in \mathbb{N}$, we have that 
     $$S_{A,B,Q}(\Z) = \cup_{d\in \mathbb{N}} S_{A,B,Q}^d(\Z);$$
     additionally, we know that $S_{A,B,Q}(\Z)$ gives all of the rational points on $S_{A,B,Q}$ (viewing the surface in the weighted projective space $\bP(2,3,1,1)$). 
     
     Assume that $S_{A,B,Q}(\Q)$ is not thin. Let $\mathcal{T}\subsetneq S_{A,B,Q}$ denote a thin set excluded from our count of rational points for Manin's conjecture. There must exist some $d_0$ (depending only on $A,B$, and $Q$) such that $\dim(\mathcal{T}\cap S_{A,B,Q}^{d_0}) < \dim (S_{A,B,Q}^{d_0})$.  
    After a quick rescaling of points by $d_0$, we can see that $S_{A,B,Q}^{d_0}(\Z)$ will be parameterized by those binary quartic forms with invariants $I(F) = d_0^2 A$ and $J(F) = d_0^3 B$. Additionally, by Mordell \cite{Mordell}, $\#\calF(d_0^2A,d_0^3B)$ will be finite. Thus, through the analogous parameterization, we can derive an asymptotic for the point counting function
     $$N^{d_0}(T) = \#\{(x,y,u,v)\in S_{A,B,Q}^{d_0}(\Z): h((x,y,u,v))\leq T, y\neq 0\}$$
     via Theorem \ref{thm: correlation} (with an error term that will depend on $d_0$, which only depends on $A,B$,and $Q$). On the other hand, since $\dim(\mathcal{T}\cap S_{A,B,Q}^{d_0})<\dim (S_{A,B,Q}^{d_0})$, it follows that 
     \begin{equation*}
         \#\{(x,y,u,v)\in S_{A,B,Q}^{d_0}\setminus \mathcal{T}(\Z): h((x,y,u,v))\leq T, y\neq 0\} \gg N^{d_0}(T).
     \end{equation*}
     
     By definition, we have the following inequality for counting rational points on $S_{A,B,Q}$ of height up to $T$:
     \begin{multline*}
         \#\{(x,y,u,v)\in S_{A,B,Q}\backslash \mathcal{T}(\Z): h((x,y,u,v,))\leq T, y\neq 0\} \\ \geq \#\{(x,y,u,v)\in S_{A,B,Q}^{d_0}\setminus \mathcal{T}(\Z): h((x,y,u,v))\leq T, y\neq 0\}.
     \end{multline*}
     So, if the asymptotic for $N^{d_0}(T)$ is nontrivial -- i.e. the leading constant is nonzero -- this approach of parameterizing rational points using syzygys of binary quartics and studying correlation sums of binary quadratics and quartics as in Theorem \ref{thm: correlation} will produce a lower bound towards Manin's conjecture. 
 \end{remark}

\subsection{Proofs of Lemmas from \S\ref{subsec: Mordell}}\label{subsec: Mordell proof}
The proof of Lemma \ref{lem: coprime Hessian and quartic}, which states that $F$ and $H_F$ share no common factors over $\overline{\Q}$, is computational. 
\begin{proof}[Proof of Lemma \ref{lem: coprime Hessian and quartic}]
    Assume that $F(x,y)$ and $H_F(x,y)$ share a common factor $(x+ry)$ over $\overline{\Q}.$
    We write that $F = (x+ry)C(x,y),$ where $C(x,y)$ is a cubic form. Then we can see that $$F_x = C(x,y) + (x+ry) C_x(x,y),$$ $$F_y = rC(x,y) + (x+ry) C_y(x,y).$$ Thus, we can compute that
$$F_{xx} = 2C_x(x,y) + (x+ry) C_{xx}(x,y),$$ 
$$F_{xy} = C_y(x,y) + r C_x(x,y) + (x+ry) C_{xy}(x,y),$$
$$F_{yx} = rC_x(x,y) + C_y(x,y) + (x+ry) C_{yx}(x,y),$$
$$F_{yy} = 2rC_y(x,y) + (x+ry) C_{yy}(x,y).$$

Now, by the definition of $H_F(x,y)$, we can see that $(x+ry)\mid H_F(x,y)$ over $\Q$ if and only if $$(x+ry) \mid 4rC_x(x,y) C_y(x,y) - (C_y(x,y)+rC_x(x,y))^2 = -(C_y(x,y)-rC_x(x,y))^2.$$
We expand out in terms of $C(x,y) = ax^3 + bx^2y+ cxy^2 + dy^3$: 
\begin{align*}C_y(x,y)- rC_x(x,y) &= (3dy^2 + 2cxy+ bx^2) - r(3ax^2 + 2bxy + cy^2) \\
&= -3rax^2 + b(x^2 - 2rxy) + c(2xy-ry^2) + 3dy^2.\end{align*}
Now, if $(x+ry) \mid C_y(x,y)-rC_x(x,y)$, then the above should be zero if we plug in $(r,-1).$ Hence, we get the following linear condition on $a,b,c,d$: $$-3r^3 a + 3r^2 b -3rc +3d = 0.$$

We note that solutions to the above linear condition can be parameterized as $a' = a$, $b' = b-ar$, $c' = c-b'r$, $d = c'r$, which is equivalent to $C(x,y) = (x+ry)(a'x^2+b'xy+c'y^2).$ So, if $(x+ry)$ divides the Hessian $H_F(x,y)$, we must have that it divides $C(x,y)$ and hence it forces $F(x,y)$ to have a double root. Since, we have assumed $F(x,y)$ to be squarefree, this gives us a contradiction. \end{proof}

In Lemma \ref{lem: expression for v(n,T)}, $v(n,T)$ is expressed in terms of the finite set of representative $\calF(A,B)$ by following the proof of \cite[Proposition 4]{Duke}. We repeat it here for the purposes of self-containment, as the lemma is crucial for the proof of the main theorem. 

\begin{proof}[Proof of Lemma \ref{lem: expression for v(n,T)}]
    Recall that 
    \begin{equation*}
        \SL_2(\Z)/\Gamma_\infty = \left\{ \begin{pmatrix}
        m_1 & * \\ m_2 & * 
    \end{pmatrix}: m_1,m_2\in \Z\right\}.
    \end{equation*}
    Fix a binary quartic $F(x,y)$ with invariants $I(F) = -4A$ and $J(F) = -4B$. 
    Define the set $$\calA_F := \{(m_1,m_2)\in \Z^2: (m_1,m_2) \text{ is admissible for }F\}\subset \SL_2(\Z)/\Gamma_\infty.$$ Then consider the map: 
    $$\varphi_F: \calA_F \rightarrow \{(x,y,n)\in \Z^2\times \Z_+: y^2 = x^3 +Axn^2 + Bn^3, \gcd(x,n)=1\},$$ $$\varphi_F(m_1,m_2) = (-H_F(m_1,m_2), T_F(m_1,m_2)/2, F(m_1,m_2)).$$
    Since $(m_1,m_2)$ are admissible for $F$, we know that the tuple $(-H_F(m_1,m_2),T_F(m_1,m_2), F(m_1,m_2))$ will lie in the set $\{(x,y,n): y^2 = x^3 + Axn^2 + Bn^3, \gcd(x,n)=1\}.$ 
    
    Now, the kernel of this map will have size $\#\Aut(F)$ since $\varphi_F$ is derived from the injection in Theorem \ref{thm: Duke parameterization}; in particular, $$\varphi_F = \varphi \circ \left(F\mid \begin{pmatrix}
        m_1 & * \\ m_2&* 
    \end{pmatrix}\right).$$
    
    Finally, since the map $\varphi$ from Theorem \ref{thm: Duke parameterization} was a bijection, the union of $\mathrm{Im}(\varphi_F)$, as $F$ ranges in $\calF(A,B)$, will cover all of $\{(x,y,n):y^2 = x^3 + Axn^2 + Bn^3, \gcd(x,n)=1\}.$
    Thus, we get that $$v(n,T) = \sum_{F\in \calF(A,B)} \frac{1}{\#\Aut(F)} \#\left\{\bm\in \calA_F: \begin{array}{c}F(\bm) = n, T_F(\bm)\neq 0, \\ \max(|H_F(\bm)|^{1/2},|T_F(\bm)/2|^{1/3})\leq T\end{array}\right\}.$$
    Plugging in the definition of the congruence conditions $R_F$ and rearranging the sum, we achieve the lemma. 
\end{proof}

\section{Correlations of binary quadratics and binary quartics}\label{sec: correlations}
The goal of this section is to establish Theorem \ref{thm: correlation}. This proof will follow closely the proof of \cite[Proposition 5.2]{MyManinChatelet} with $L=1$, so we aim to highlight the differences in the two proofs. There are two key changes in this formulation. First, we let $Q(u,v)$ be \textit{any positive-definite binary quadratic}, rather than of the form $x^2 + ay^2$. Second, we allow for a \textit{congruence restriction} $(x,y)\equiv \alpha \bmod M$; however, we are allowing ourselves arbitrary dependency on $M$ in the error term, which eases the difficulty of this constraint. We also point out that in this work we do not handle the indefinite case (although we expect this case to follow with the method presented in \cite{MyManinChatelet} for indefinite forms), nor do we inspect the leading constant. 

There is also a noticeable difference with the setting of \cite[Proposition 5.2]{MyManinChatelet} (the goal being Manin's conjecture for Ch\^atelet surfaces) in that we do not have the extra summation over variable labeled as $t$; this is due to counting integral points rather than rational points. Thus, the argument in \cite[Section 6]{MyManinChatelet} is not applicable for this correlation sum; correspondingly, the restrictions of $(A,B,Q(u,v))$ come from the fact that the arguments of \cite[Section 7]{MyManinChatelet} can not provide a bound for all finite image Hecke characters and are highlighted further in the \cite{MyBaseChange}.

\subsection{Notation}
\begin{itemize}
    \item $K_Q$ is the quadratic extension of $\Q$ by the binary quadratic form $Q(u,v)$; this is an imaginary quadratic field.
    \item $\chi$ is the real quadratic character such that $\zeta_{K_Q}(s) = \zeta(s) L(s,\chi)$.
    \item $\chi_q$ is the real quadratic character of modulus $q$.
    \item $\Delta$ is the discriminant of $Q(u,v)$. 
    \item $\fa, \fp$ are ideals of $\calO_{K_Q}$. 
    \item $\calC$ denotes the class group of $K_Q$. 
    \item $\calH$ denotes the Hilbert class field of $K_Q$; then $\Gal(\calH/K_Q) \cong \calC$,
    \item $K_F$ denotes the splitting field of a binary form $F(u,v).$
    \item We define the $q$-part of an integer $n$ as: $$p_q(n) = \prod_{\substack{p\textrm{ prime}\\ p^e\| q\\ p\mid q}}p^e.$$
    We also define the part of $n$ coprime to $q$ as $$p_{\neg q}(n) = n/p_q(n) = \prod_{\substack{p\textrm{ prime}\\ p^e \| n \\ p\nmid q}} p^e,$$
\end{itemize}

\subsection{Decomposition of representation number}

Let us start by decomposing the representation number $r_Q(n)$ into Eisenstein and cuspidal components using the class group. Let $\calU$ denote the unit group of $K_Q$. Then we know that 
\begin{equation*}
    r_Q(n) = |\calU| \cdot \#\{\fa\subset \calO_{K_Q}: N(\fa)=n, \fa \text{ lies in }[Q]\},
\end{equation*}
where $[Q]$ is the ideal class given by the quadratic form $Q(u,v)$. If $Q(u,v) = u^2 + a v^2$, then $[Q]=1$ is the principal class. 

Using class group characters, we can reexpress the sum above as: 
\begin{equation*}
    r_Q(n) = \frac{|\calU|}{|\calC|} \sum_{\psi\in \hat{\calC}} \overline{\psi([Q])} \sum_{N(\fa)=n} \psi(\fa).
\end{equation*}
As in \cite[Section 3.6]{MyManinChatelet}, the Fourier coefficients of the Eisenstein (denoted $\lambda_E(n))$ and cuspidal (denoted $\lambda_C(n))$ components are respectively:
\begin{equation}\label{eq: def Eisenstein Fourier}
    \lambda_E(n) := \frac{|\calU|}{|\calC|} \sum_{\substack{\psi\in \hat{\calC}\\ \ord(\psi)\leq 2}} \overline{\psi([Q])} \sum_{N(\fa)=n} \psi(\fa) = \frac{|\calU|}{|\calC|} \sum_{\substack{q_1q_2=\Delta \\ \gcd(q_1,q_2)=1}}' \psi_{q_1,q_2}([Q]) \cdot \varepsilon_{q_1,q_2}(n).
\end{equation}

\begin{equation}\label{eq: def cuspidal Fourier}
    \lambda_C(n) := \frac{|\calU|}{|\calC|} \sum_{\substack{\psi\in \hat{\calC}\\ \ord(\psi)\geq 3}} \overline{\psi([Q])} \sum_{N(\fa)=n} \psi(\fa).
\end{equation}
Above, $\psi_{q_1,q_2}$ denotes the genus character and our sum is restricted to those $(q_1,q_2)$ that generate genus characters; as described in \cite[\S12.5]{IwaniecKowalski}, all genus characters will be given by pairs $(q_1,q_2)$ satisfying that $\gcd(q_1,q_2)=1$ and $q_1q_2=\Delta$ (although not all pairs will necessarily give a genus character). Let $\chi_{q_i}$ denotes the primitive real quadratic character with modulus $q_i$. For $\gcd(n,\Delta)=1$, these Eisenstein coefficients are given by 
\begin{equation*}
    \varepsilon_{q_1,q_2}(n) = \sum_{d\mid n} \chi_{q_1}(d)\chi_{q_2}(n/d).
\end{equation*}

From our decomposition of $r_Q(n)$, we can rewrite the sum in Theorem \ref{thm: correlation} in terms of: 
\begin{equation}\label{eq: def M(q_1,q_2)}
    \calM(X;\calR, \alpha \bmod M; q_1,q_2) = \sum_{\substack{x,y\in X\calR \\ \gcd(x,y)=1\\ (x,y)\equiv \alpha \bmod M}} \varepsilon_{q_1,q_2}(F(x,y)), 
\end{equation}
\begin{equation}\label{eq: def N(psi)}
    \calN(X;\calR, \alpha\bmod M; \psi) = \sum_{\substack{x,y\in X\calR \\ \gcd(x,y)=1\\ (x,y)\equiv \alpha\bmod M}} \sum_{N(\fa)=F(x,y)} \psi(\fa).
\end{equation}
As in \cite{MyManinChatelet}, the contribution from the Eisenstein part (i.e. $\calM(X;\calR, \alpha \bmod M; q_1,q_2)$) will be our main term, and the cuspidal contribution will form an error term. 

\subsection{The cuspidal contribution}\label{subsec: cusp}
The proof of the upper bound for $\calN(X;\calR,\alpha\bmod M;\psi)$ follows the one presented in \cite[\S7]{MyManinChatelet} and generalized further in \cite{MyBaseChange} for cuspidal automorphic $\GL(2)$ representations that satisfy the Ramanujan-Petersson conjecture. In this proof, we place absolute values around the internal sum, i.e. 
$$\left|\sum_{N(\fa)=n}\psi(\fa)\right|,$$
and utilize the distribution of the Hecke eigenvalues to derive a small power of $\log(X)$ savings. For further discussion on why this method should be expected to produce poly-logarithmic savings, despite the absolute values, and the connection between this problem and base change for $\GL(2),$ we refer the reader to \cite{MyBaseChange}.

\begin{prop}\label{prop: cusp}
    Let $\psi\in \hat{\calC}$ and $\ord(\psi)\geq 3$. If $K_Q\not\subset K_F$, then 
    $$\calN(X;\calR,\alpha\bmod M; \psi) \ll_{F,Q,\calR} X^2 \log(X)^{-0.066}.$$
    If $K_Q\subset K_F$ and $K_F\cap \calH = K_Q$, then 
    $$\calN(X;\calR, \alpha\bmod M; \psi)\ll_{F,Q,\calR} X^2 \log(X)^{1/2}.$$
\end{prop}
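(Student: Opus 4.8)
The goal is to bound $\calN(X;\calR,\alpha\bmod M;\psi)$ for a class group character $\psi$ of order $\geq 3$, which means bounding $\sum_{(x,y)} \lambda_\psi(F(x,y))$ where $\lambda_\psi(n) = \sum_{N(\fa)=n}\psi(\fa)$ are the Fourier coefficients of the weight-one cusp form attached to $\psi$ (this is where the hypothesis $\ord(\psi)\geq 3$ is used: for $\ord(\psi)\leq 2$ the theta series is Eisenstein). The main idea is that $\lambda_\psi$ is the Hecke eigenvalue system of a $\GL_2(\A_\Q)$ automorphic representation $\pi_\psi$ (automorphic induction from the Hecke character $\psi$ on $K_Q$), and it is tempered at all finite places, so the Appendix's machinery applies directly to $\sum_{(x,y)\in X\calR}|\lambda_\psi(F(x,y))|$. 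First I would reduce to this absolute-value sum by the triangle inequality, at the cost of nothing since the claimed bounds are power-of-log savings or losses, not asymptotics. Then I would invoke Corollary \ref{cor: class group characters} (the stated consequence of the Appendix sieve argument) with the polynomial being the dehomogenization $f$ of $F$ — treating the sum over the lattice box $X\calR$ as essentially $\sum_{x\leq X}\sum_{y\leq X}|\lambda_\psi(f_y(x))|$ or by a direct two-variable version — to extract the savings. The coprimality condition $\gcd(x,y)=1$ and congruence condition $(x,y)\equiv\alpha\bmod M$ are handled by Möbius/additive-character expansion, which only affects implied constants (the paper allows arbitrary dependence on $M$); one must check these do not destroy the savings, but since $F$ remains squarefree after restricting to a sublattice this is routine.

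\textbf{The dichotomy.} The two cases in the Proposition mirror the two regimes in the Appendix. When $K_Q\not\subset K_F$: the base change of $\pi_\psi$ to $K_F$ remains cuspidal (this is precisely the condition $K_Q\not\subset K_F$ translated into representation-theoretic terms — $\pi_\psi$ is dihedral induced from $K_Q$, and its base change to $K_F$ is cuspidal exactly when $K_F$ does not contain the quadratic field $K_Q$ that the representation is induced from), so the Appendix gives genuine logarithmic savings $\ll X^2\log(X)^{-c}$ with $c = 0.066$ (the constant coming from an explicit zero-free-region / Landau-type computation for the relevant $L$-function). When $K_Q\subset K_F$ but $K_F\cap\calH = K_Q$: the base change to $K_F$ is no longer cuspidal — it becomes an isobaric sum of Hecke characters over $K_F$ — but the condition $K_F\cap\calH=K_Q$ ensures $\psi$ does not become trivial (or a genus-type character) after base change, so while one loses the power savings, one still controls the sum by $\ll X^2\log(X)^{1/2}$; this $\log(X)^{1/2}$ is the size of $\sum_{n\leq X}|\lambda|$ when $\lambda$ is essentially a (non-principal) divisor-type function twisted by characters, coming from the fact that $F(x,y)$ is then a norm form for $K_F$ and the character sum over divisors/ideals has a small but nonzero mean.

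\textbf{Executing the count.} Concretely, for each case I would write $\calN = \sum_{\bm\in X\calR,\ \gcd(m_1,m_2)=1,\ \bm\equiv\alpha}\lambda_\psi(F(\bm))$, fix one variable (say slice by $m_2=t$), and for each $t$ apply the one-variable Appendix result to $\sum_{m_1}|\lambda_\psi(F(m_1,t))|$ where $F(\cdot,t)$ is a squarefree (for all but $O(1)$ values of $t$, or all but those making the discriminant vanish) polynomial in $m_1$ of degree $\leq 4$ whose splitting field is a subfield of $K_F$. The exceptional $t$ contribute $O(X)$ trivially. Summing over $t\leq X$ gives the stated bounds after noting the savings exponent is uniform in $t$. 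I would handle the $\gcd$ and congruence conditions either before slicing (pass to the sublattice $\bm\equiv\alpha\bmod M$ and Möbius-expand the coprimality, absorbing everything into $M$-dependence) or verify the Appendix statement is already phrased for such restricted sums.

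\textbf{Main obstacle.} The crux is not the sieve bookkeeping but verifying that the hypothesis of Theorem \ref{thm: correlation} — $K_F$ and $\calH$ independent, refined here to $K_Q\not\subset K_F$ or $K_F\cap\calH=K_Q$ — translates exactly into the cuspidality (resp. controlled non-cuspidality) of the base change of $\pi_\psi$ to $K_F$, so that the Appendix's automorphic input can be invoked with the right hypothesis. This is the representation-theoretic heart: $\pi_\psi$ is the dihedral representation $\mathrm{AI}_{K_Q/\Q}(\psi)$, its base change to $K_F$ is cuspidal iff $\psi \neq \psi^\sigma$ after restriction, i.e. iff the relevant Galois-conjugation does not fix $\psi$, and one must chase how the condition on $K_F\cap\calH$ (equivalently, on which characters of $\calC$ become trivial after the relevant restriction) controls this. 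Once that translation is pinned down, the analytic bounds are exactly what the Appendix delivers, and the rest is the routine slicing argument above. I would therefore organize the proof as: (1) identify $\lambda_\psi$ with Hecke eigenvalues of $\mathrm{AI}_{K_Q/\Q}(\psi)$, tempered at finite places; (2) translate the field hypotheses into base-change behavior over $K_F$; (3) apply Corollary \ref{cor: class group characters} / the Appendix bounds slice-by-slice; (4) sum over slices and absorb $\gcd$, congruence, and exceptional-slice contributions.
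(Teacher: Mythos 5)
Your overall plan coincides with the paper's: the proof is simply the triangle inequality (which lets you forget the $\gcd$ and congruence conditions entirely, since after taking absolute values every term is nonnegative and you are restricting to a subset of $X\calR$), followed by a direct application of Corollary \ref{cor: class group characters}, whose two cases match the two cases of the proposition exactly (note that $K_F\cap\calH=K_Q$ forces $\Gal(\calH/\calH\cap K_F)=\calC\not\subset\ker(\psi^2)$ because $\ord(\psi)\geq 3$). Two execution details in your write-up would cause trouble if carried out literally. First, you do not need, and cannot use, M\"obius or additive-character expansion for the $\gcd$ and congruence conditions: once you have passed to $\sum|\lambda_\psi(F(x,y))|$, expanding a condition inside a sum of absolute values is not legitimate, and dropping the conditions by positivity is both correct and what the paper does. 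Second, your slice-by-slice reduction to the one-variable Appendix theorem does not work as stated: Nair's sieve (Theorem \ref{thm: sieve nair}) carries an implied constant depending on the polynomial, and the polynomials $F(\cdot,t)$ have coefficients growing with $t\leq X$, so the bounds are not uniform in the slice. This is precisely why the Appendix invokes the genuinely two-variable binary-form sieve \eqref{eq: sieve binary form} of de la Bret\`eche--Tenenbaum, and why Corollary \ref{cor: class group characters} is stated for binary forms directly; your hedge about ``a direct two-variable version'' is the route you must take, not an optional alternative.
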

\begin{proof}
    Note that 
    \begin{equation*}
        \calN(X;\calR,\alpha\bmod M; \psi) \leq \sum_{x,y\in X\calR} \left|\sum_{N(\fa)=F(x,y)} \psi(\fa)\right|.
    \end{equation*}
    Additionally, $X\calR$ will be contained in a box of sidelength $\leq c_{\calR} X$ for some constant $c_{\calR}>0$. Thus, it suffices to bound the sum 
    \begin{equation*}
        \sum_{|x|,|y|\ll X} \left|\sum_{N(\fa)=F(x,y)} \psi(\fa)\right|.
    \end{equation*}

    Since $\psi$ is a class group character of $K_Q$, it gives a $\GL_2(\A_{\Q})$ representation that we denote as $\Psi$ (specifically, $\Psi = \Ind_{W_\Q}^{W_Q} \psi$). Then  
    \begin{equation}
    \lambda_\Psi(n) = \sum_{\substack{\fa\subset \calO_F \\ N_{F/\Q}(\fa) = n}} \psi(\fa)
\end{equation}
will be the Hecke eigenvalues of $\Psi$; hence $|\lambda_\Psi(n)|$ will be a nonnegative multiplicative function. The sieve of de la Bret\`eche and Tenenbaum \cite[Theorem 1.1]{delaTenenbaumSieve} for upper bounding nonnegative multiplicative functions along squarefree binary forms gives us that 
\begin{equation*}
    \sum_{|x|,|y|\ll X} \left|\lambda_\Psi(F(x,y))\right| \ll_{F} X^2 \exp\left(\sum_{p\ll X} \frac{|\lambda_\Psi(p)|-1}{p}\cdot \#\{x\in \F_p: F(x,1) = 0\bmod p\}\right).
\end{equation*}
Here we have used that $\#\{x,y\bmod p: F(x,y)=0\bmod p\} \leq p \cdot \#\{x\bmod p: F(x,1)=0 \bmod p\}$ at all but finitely many primes $p$. Hence, we have essentially reduced our problem to the single variable case of correlation sums of $|\lambda_\Psi(n)|$ along polynomial values studied in \cite{MyBaseChange}; thus we apply the results of \cite[Theorem 1.1]{MyBaseChange} and \cite[Theorem 1.5]{MyBaseChange}; specifically that there will be a logarithmic savings if the base change of $\Psi$ to $K_F$ is cuspidal.

We want to determine when the base change $\Psi_{K_F}$ to $\GL_2(\A_{K_F})$ will remain a cuspidal representation. First, by {\cite[Theorem 2 (a),(b)]{GerardinLabesseBaseChange}}, if $K_Q\not\subset K_F$, then $\Psi_K$ will remain cuspidal. In this case, the bound from \cite[Theorem 1.1]{MyBaseChange} holds and 
\begin{equation*}
    \sum_{p\ll X} \frac{|\lambda_\Psi(p)|-1}{p}\cdot \#\{x\in \F_p: F(x,1) = 0\bmod p\} \leq -0.0665  \log\log(X) + o(\log\log X).
\end{equation*}
Hence, when $K_Q\not\subset K_F$, we get the stated bound that 
\begin{equation*}
    \sum_{|x|,|y|\ll X} |\psi_{\Psi}(F(x,y))|\ll_F X^2\log(X)^{-0.066}.
\end{equation*}

Now, if $K_Q\subset K_F$, we care about the relation of $\psi$, its kernel field, and $K_F$. Since $\psi$ is a class group character, its kernel field $L_\psi$ is contained in $\calH$, the Hilbert class field. Further, $\Gal(\calH/\calH\cap K_F) \cong \Gal(L_\psi/L_\psi\cap K_F).$ So, by \cite[Theorem 1.5]{MyBaseChange}, if $\Gal(\calH/\calH\cap K_F) \not\subset \ker(\psi^2)$ (observe that $\ker(\psi^2)$ is a proper subgroup since $\ord(\psi)\geq 3$), we still achieve a nontrivial savings that 

\begin{equation*}
    \sum_{|x|,|y|\ll X}|\lambda_\Psi(F(x,y))| \ll_F X^2 \log(X)^{1/2}.
\end{equation*}
This completes the proof of Proposition \ref{prop: cusp}.
\end{proof}

Consequently, recalling the definition of $\lambda_C(n)$ given in (\ref{eq: def cuspidal Fourier}), we know that 
\begin{equation}\label{eq: cuspidal contribution}
    \sum_{\substack{(x,y)\in X\calR \\ \gcd(x,y)=1 \\ (x,y)\equiv \alpha\bmod M}} \lambda_C(F(x,y)) \ll_{F,Q,\calR} X^2 \log(X)^{\beta_{F,Q} - 0.066}; 
\end{equation}
observe here that if $K_Q\subset K_F$ then $\beta_{F,Q}\geq 1.$ We also point out that both the conditions that $\gcd(x,y)=1$ and $(x,y)\equiv \alpha \bmod M$ are not utilized in the upper bounding procedure.

\subsection{The Eisenstein contribution}\label{subsec: eisenstein}
Finally, we wish to derive an asymptotic for (\ref{eq: def M(q_1,q_2)}) in the same vein as \cite[Proposition 8.1]{MyManinChatelet} when $t=k=1$; the main difference in our analysis is the tracking of the congruence condition $\alpha\bmod M$. However, since we allow arbitrary dependency on $\alpha\bmod M$, this does not bring any technical complications to the proof, only notational. We also point out that the argument below is \textit{not} self-contained and will cite many lemmas in \cite{MyManinChatelet}; a similar argument is also presented in \cite{delaTenenbaum-Manin} for analyzing these sums. Our goal in this subsection is to establish:

\begin{prop}\label{prop: Eisenstein}
   Let $F$ be a squarefree binary quartic form. Let $\gcd(q_1,q_2)=1$ and $q_1q_2 = \Delta(Q)$. Then for any fixed convex region $\calR$ and congruence condition $\alpha\bmod M$, 
    \begin{equation*}
        \calM(X;\calR,\alpha\bmod M;q_1,q_2) = c_{\alpha \bmod M,q_1,q_2,F,Q} X^2|\calR|\log(X)^{\beta_{F,Q}} + O_{\calR,M,F,Q}(X^2\log(X)^{\beta_{F,Q}-10^{-8}}),
    \end{equation*}
    where $\beta_{F,Q}$ is the exponent defined in Theorem \ref{thm: correlation}.
\end{prop}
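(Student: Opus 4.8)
The plan is to analyze $\calM(X;\calR,\alpha\bmod M;q_1,q_2)$ by opening the inner divisor sum $\sum_{d\mid F(x,y)}\chi_{q_1}(d)\chi_{q_2}(F(x,y)/d)$ and performing a hyperbola-type split: writing $F(x,y)=de$ with $d\leq e$ (so $d\leq |F(x,y)|^{1/2}\ll X^2$) and handling the diagonal-ish pieces separately. The key structural point, exactly as in \cite[Proposition 8.1]{MyManinChatelet}, is that for each fixed ``small'' divisor $d$ the condition $d\mid F(x,y)$ is a union of congruence conditions on $(x,y)$ modulo $d$ (refining the condition $\gcd(x,y)=1$ and $(x,y)\equiv\alpha\bmod M$), whose number and weight are governed by the factorization type of $F$ over $\Z/d\Z$; this is where the splitting field $K_F$ and the relation $K_Q\subset K_F$ (or not) enter and ultimately produce the exponent $\beta_{F,Q}$. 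First I would reduce to counting lattice points of $X\calR$ in these residue classes via a standard lattice-point estimate, using that $\calR$ is convex with $|\partial(X\calR)|\ll X$, which converts each term into a main term of size $\asymp X^2|\calR|\cdot(\text{multiplicative weight})/d^2$ plus an error $O((X/d)+1)$ (or better, using an estimate on average over $d$). Summing the main terms over $d$ and over the $\SL_2$-orbit data produces an Euler product times a power of $\log X$; the cross-over point $d\asymp X^2$ in the hyperbola method is what yields $\log$ rather than a convergent sum, and the genus-character twists $\chi_{q_1},\chi_{q_2}$ either leave the relevant local factors with a pole (contributing to the exponent) or introduce oscillation that is summed by a character-sum/Siegel–Walfisz-type input.

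Concretely, I would follow the $t=k=1$ specialization of \cite{MyManinChatelet} step by step: (i) invert the order of summation to sum over $d$ on the outside; (ii) for $d$ up to some threshold $D=X^{2-\epsilon}$, replace the lattice-point count in $X\calR$ intersected with the congruence class mod $\mathrm{lcm}(d,M)$ by its expected main term, controlling the accumulated error by $\sum_{d\leq D}(X/d)\ll X\log X = o(X^2)$; (iii) for $d$ in the complementary range, use the symmetry $d\leftrightarrow F(x,y)/d$ to fold back, so one only ever needs $d\leq |F(x,y)|^{1/2}$; (iv) identify the resulting arithmetic sum $\sum_d \rho_F(d)\chi_{q_1q_2}(\cdot)/d^2\cdot(\ldots)$ as, up to a negligible tail, the value at $s=1$ of a Dirichlet series that factors as $L(s,\text{something})^{\beta_{F,Q}}$ times a holomorphic Euler product, using the description of $\rho_F(d)=\#\{(x,y)\bmod d: d\mid F(x,y),\ \gcd(x,y,d)=1\}$ in terms of the Galois group of $K_F$ and the Chebotarev/Landau machinery; (v) extract the leading term $c_{\alpha,q_1,q_2,F,Q}X^2|\calR|\log(X)^{\beta_{F,Q}}$ with a power-of-log saving in the error via a contour shift / Perron-type argument or the Selberg–Delange method, exactly as invoked in \cite{MyManinChatelet} and \cite{delaTenenbaum-Manin}. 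The congruence restriction $(x,y)\equiv\alpha\bmod M$ is carried through verbatim: it only changes the modulus of the lattice-point counts from $d$ to $\mathrm{lcm}(d,M)$ and rescales the local densities at primes dividing $M$, with all $M$-dependence dumped into the implied constant.

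I expect the main obstacle to be the uniformity in the divisor $d$ over the ``long'' range, i.e.\ getting a genuine power-of-$\log$ saving rather than just $o$ of the main term. The naive lattice-point error $O(X/d+1)$ is too weak once $d$ approaches $X^2$, so one needs the more delicate treatment from \cite{MyManinChatelet}: splitting the divisor according to its $q$-part versus its part coprime to $\Delta(Q)$ (the $p_q$, $p_{\neg q}$ notation set up in the paper is precisely for this), treating the part coprime to the discriminant by the Selberg–Delange/Landau method with a quantitative zero-free region for the relevant Hecke $L$-functions, and absorbing the ramified part into finitely many fixed congruences. The hypothesis that $K_F$ and the Hilbert class field $\calH$ of $K_Q$ are independent is what guarantees the associated $L$-function in the Selberg–Delange analysis has no \emph{unexpected} pole, pinning the log-exponent to the stated $\beta_{F,Q}$ and legitimizing the classical (non-cuspidal) error analysis here — the genuinely automorphic difficulty having been quarantined into the cuspidal contribution of \S\ref{subsec: cusp} and the Appendix. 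Once the Dirichlet-series factorization and a standard zero-free region are in hand, (v) is routine, so the real work is (ii)–(iv) and the bookkeeping of local factors; these are not conceptually new relative to \cite{MyManinChatelet}, \cite{delaTenenbaum-Manin}, and \cite{Daniel}, so I would present them by citing those sources and spelling out only the modifications forced by general $Q$ and by the modulus $M$.
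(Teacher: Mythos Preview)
Your proposal is on the right track and correctly names the crux (the lattice error for $d$ near $X^2$), but the route differs from the paper's in ways that matter.

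First, three structural steps the paper takes that you omit or handle differently: (a) the congruence $(x,y)\equiv\alpha\bmod M$ is absorbed at the outset by passing to the shifted form $F_{\alpha,M}(x',y')=F(Mx'+\alpha_1,My'+\alpha_2)$ and the rescaled region $\calR_{\alpha,M}=M^{-1}(X\calR-\alpha)$, so no $\mathrm{lcm}(d,M)$ bookkeeping is carried through; (b) a multiplicativity identity (\cite[Lemma~8.3]{MyManinChatelet}) decomposes $\varepsilon_{q_1,q_2}(F(\bx))$ into a product over the irreducible factors $F_1,\dots,F_r$ of $F_{\alpha,M}$, which is essential since the exponent is literally $\beta_{F,Q}=\sum_i\mathbf 1_{K_Q\subset K_{F_i}}$ and all the cited level-of-distribution and large-moduli inputs are stated per irreducible factor; (c) the twist $\chi_{q_1}\star\chi_{q_2}$ is converted to $1\star\chi$ (with $\chi$ the quadratic character of $K_Q$) \emph{before} any lattice counting, at the cost of an auxiliary sum over the $\Delta(Q)$-part $n$ of $F(\bx)$ and a residue class $\ba\bmod\Delta(Q)$.

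Second, and more substantively, the paper does not use the hyperbola fold-back $d\leftrightarrow F(\bx)/d$ at all. Instead it invokes two distinct black boxes from \cite{MyManinChatelet}: a level-of-distribution result (Lemma~\ref{lem: LoD}, from \cite[Prop.~9.1]{MyManinChatelet}) giving $E(\bD)\ll D+\sqrt{D}\,X^{1/2}\cdot(\text{slowly varying})$, which is useful up to $D=X^2\log(X)^{-\epsilon}$, and separate ``large moduli'' propositions (Lemmas~\ref{lem: large moduli 1}--\ref{lem: large moduli 2}, from \cite[Props.~10.1--10.2]{MyManinChatelet}) that directly bound the contribution of $\bD$ with $D\gg X^2/\log^\epsilon$ by Cauchy--Schwarz and a second-moment count. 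Your fold-back does not by itself fix the $+1$ in the lattice error summed to $d\asymp X^2$; the repair is not Selberg--Delange on the $\Delta(Q)$-coprime part (that step only enters at the very end, to extract the log-power from the already-isolated main-term Dirichlet series $\xi(s)\sim R(s)\prod_i\zeta_{K_{F_i}}(s,\chi)$), but precisely these large-moduli lemmas, which you have not supplied.
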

\begin{proof}
First, we move the congruence condition that $(x,y)\equiv \alpha \bmod M$ into the quartic form; we do so by observing that 
\begin{multline*}
    \sum_{\substack{(x,y)\in X\calR \\ \gcd(x,y)=1\\ (x,y)\equiv \alpha\bmod M}} \varepsilon_{q_1,q_2}(F(x,y))  = \sum_{\substack{(x',y')\in M^{-1}(X\calR - \alpha)\\ \gcd(Mx'+\alpha_1,My'+\alpha_2)=1}} \varepsilon_{q_1,q_2}(F(Mx'+\alpha_1,My'+\alpha_2)).
\end{multline*}
Let us define $F_{\alpha,M} (x',y') := F(Mx'+\alpha_1,My'+\alpha_2)$; we know that $F_{\alpha,M}$ is a square binary quartic form. We also write $\calR_{\alpha,M}:= M^{-1} (X\calR - \alpha)$, which will still be a convex region. Furthermore, $|\calR_{\alpha,M}| = M^{-2}X|\calR|$ and $\partial \calR_{\alpha,M} \ll_{\calR} M^{-1} X.$ In summary, 
$$\calM(X;\calR, \alpha\bmod M; q_1,q_2) = \sum_{\substack{\bx\in \calR_{\alpha,M}\\ \gcd(Mx_1+\alpha_1, Mx_2+\alpha_2)=1}} \epsilon_{q_1,q_2}(F_{\alpha,M}(\bx)).$$

Second, we use \cite[Lemma 8.3]{MyManinChatelet} to turn the sum over all divisors $d\mid F_{\alpha,M}(\bx)$ into a sum over divisors of the irreducible factors of $F_{\alpha,M}(\bx).$ 
\begin{lemma}[{\cite[Lemma 8.3]{MyManinChatelet}}]\label{lem: multiplicativity of eisenstein}
Let $n$ and $m$ be integers satisfying $\gcd(nm,\Delta)=1$.
Then $$\varepsilon_{q_1,q_2}(nm) = \sum_{\substack{c\mid \gcd(m,n)}} \mu(c)\chi(c) \varepsilon_{q_1,q_2}(n/c) \varepsilon_{q_1,q_2}(m/c).$$    
\end{lemma}
By the reduction of \S8.1 of \cite{MyManinChatelet}, we have that 
\begin{equation*}
\calM(X;\calR,\alpha\bmod M; q_1,q_2) = \sum_{\bc} \mu(\bc)\chi(\bc) S(X;\calR,\alpha\bmod M; \bc),   
\end{equation*}
where the sum is over tuples $\bc$ with $c_{ij}\mid \text{Res}(F_i,F_j)$ for $F_1,...,F_r$ irreducible factors of $F_{\alpha,M}$ and 
\begin{equation}\label{eq: def of S(c)}
    S(X;\calR,\alpha\bmod M; \bc) = \sum_{\substack{\bx\in \calR_{\alpha,M}\\ \gcd(Mx_1+\alpha_1,M x_2+\alpha_2)=1\\ c_i\mid F_i(\bx)}} \prod_{i=1}^r \sum_{d_i\mid F_i(\bx)} \chi_{q_1}(d_i)\chi_{q_2}(F_i(\bx)/d_ic_i).
\end{equation}

Third, we follow \S8.2 of \cite{MyManinChatelet} and relate $\chi_{q_1}\star \chi_{q_2}$ with the Dirichlet convolution $1\star\chi$, where $\chi$ is the quadratic character generating $K_Q$. We obtain that 
\begin{multline*}
    S(X;\calR,\alpha\bmod M;\bc) \\ = \sum_{\substack{n\ll X\\ p\mid n\implies p\mid \Delta}}\chi_{q_1}(p_{q_2}(n))\chi_{q_2}(p_{q_1}(n)) \mathbf{1}_{\exists \fa:N(\fa)=n}\sum_{\substack{\ba\bmod \Delta\\ \gcd(a_i,\Delta)=1}} \prod_{i=1}^r \chi_{q_1}(a_i) S(X;\calR, \alpha\bmod M; \bc,n,\ba),
\end{multline*}
where we define 
\begin{equation}\label{eq: def of S(c,n,a)}
    S(X;\calR, \alpha\bmod M; \bc,n,\ba) = \sum_{\substack{\bx\in \calR_{\alpha,M}\\ \gcd(Mx_1+\alpha_1,M x_2+\alpha_2)=1\\ c_i\mid F_i(\bx) \\ p_{\Delta}(F_{\alpha,M}(\bx))=n \\ p_{\neg \Delta}(F_{i}(\bx)/c_i) \equiv a_i\bmod n\Delta}} \prod_{i=1}^r (1\star \chi)(p_{\neg \Delta}(F_i(\bx)/c_i)). 
\end{equation}

Next, we apply M\"obius inversion to remove the gcd condition: 
\begin{equation*}
    S(X;\calR,\alpha\bmod M;\bc,n,\ba) = \sum_{b\ll_{\calR} X} \mu(b) S(X;\calR,\alpha\bmod M;\bc,n,\ba,b),
\end{equation*}
where we define
\begin{equation}\label{eq: def of S(c,n,a,b)}
    S(X;\calR,\alpha\bmod M;\bc,n,\ba,b) = \sum_{\substack{\bx\in \calR_{\alpha,M}\\ b\mid M\bx+\alpha \\ c_i\mid F_i(\bx) \\ p_{\Delta}(F_{\alpha,M}(\bx))=n \\ p_{\neg \Delta}(F_{i}(\bx)/c_i) \equiv a_i\bmod n\Delta}} \prod_{i=1}^r (1\star \chi)(p_{\neg \Delta}(F_i(\bx)/c_i))
\end{equation}
By \cite[Lemma 8.4, Lemma 8.6]{MyManinChatelet} with $t=k=1$, $L=1$, we can bound away the contribution from those $n,b\geq \log(X)^{-10^{-7}}$ to a suitable error term. \\

We now recall that in \cite{MyManinChatelet} to evaluate $S(X;\calR,\alpha\bmod M; \bc, n,\ba,b)$, we needed to establish two results: a level of distribution bound and an upper bound on the contribution of the large moduli. Define for a fixed set of divisors $\bd = (d_i)_{i=1}^r$:
\begin{equation}\label{eq: def of S(d;c,n,a,b}
T(\bd; X;\calR, \alpha\bmod M; \bc,n,\ba,b) = \#\left\{\bx\in \calR_{\alpha,M}: \begin{array}{c}
     b\mid M\bx+\alpha, \\ c_id_i\mid F_i(\bx),  \\
     p_{\Delta}(F_{\alpha,M}(\bx))=n, \\ p_{\neg \Delta} (F_i(\bx)/c_i) \equiv a_i \bmod n\Delta
\end{array}\right\}.
\end{equation}

First, we would like a nearly-sufficient level of distribution bound for our binary form $F_{\alpha,M}$. Since we allow arbitrary dependence on our congruence condition $\alpha\bmod M$, we can establish this with careful lattice estimates. 
\begin{lemma}\label{lem: LoD}
    Let $F_{\alpha,M} = \prod_{i=1}^r F_i$ be a squarefree binary form of degree $\leq 4$, where $F_i(x,y)$ are irreducible factors of $F(x,y)$. For a tuple $\mathbf{e} = (e_i)$, we denote the product of the entries as $e = e_1...e_r$. For a fixed $n,\ba,\bc, \mathbf{d}=(d_i)$ and $b$, we define the following local function: 
    \begin{multline*}\varrho_{F}(\mathbf{d};\alpha\bmod M;\bc,n,\ba,b) := \#\{\bx\bmod bcdn\Delta: c_id_i\mid F_i(x,y),    b\mid \gcd(Mx_1+\alpha_1,Mx_2+\alpha_2), \\ p_{\Delta}(F_{\alpha,M}(x,y)) = n, p_{\neg \Delta}(F_i(x,y)/c_i)\equiv  a_i\bmod \Delta\}.\end{multline*}
    For a tuple $\bD = (D_i)_{i=1}^r$, we define the error quantity as: 
    $$E(\bD) = \sum_{d_i\sim D_i} \left|T(\bd; X;\calR,\alpha\bmod M; \bc,n,\ba,b) - |\calR_{\alpha,M}|\cdot \frac{\varrho_{F}(\mathbf{d};\alpha\bmod M;\bc,n,\ba,b)}{(bcdn\Delta)^2}\right|.$$
    If $F(x,y)$ has no linear factors, then we have that for an explicit constant $\gamma_r>0$, 
    \begin{multline*}
        E(\bD) \ll_{\calR,M,F,Q}  D \cdot \log\left(1+\frac{X^2}{D}\right)^9 +
        \frac{D}{b^2}\cdot \exp(\gamma_r \sqrt{\log_2(X)\log_3(X)})\\
        + \frac{\tau(b^4)}{b} \cdot X\sqrt{D} \cdot \exp(\gamma_r\sqrt{\log_2(X)\log_3(X)}).
    \end{multline*}
    If $F(x,y)$ has a linear factor, then we have that 
    \begin{multline*}E(\bD) \ll_{\calR,M,F,Q} D \cdot \log\left(1+\frac{X^2}{D}\right)^9 +
         \frac{D}{b^2}\cdot \exp(\gamma_r \sqrt{\log_2(X)\log_3(X)})\\
        +  \frac{\tau(b^4)}{b} \cdot X\sqrt{D} \cdot \exp(\gamma_r\sqrt{\log_2(D)\log_3(D)}) \\ 
        + \frac{X}{b}\cdot \max_{i:\deg(F_i)=1} D_i\log(D)^2 \log\left(1+\frac{X^2}{D}\right)^8.
    \end{multline*}
\end{lemma}
\begin{proof}
    Since $\calR_{\alpha,M}$ is a convex region satisfying that $\partial \calR_{\alpha,M} \ll_{\calR,M} X$, we apply the lattice point counting argument of \cite[Proposition 9.1]{MyManinChatelet}, taking $L=t=k=1$ and replacing the box $B^{1/2}\mathcal{B}(\bx_0,L)$ with our region $\calR_{\alpha,M}$. Finally, we recall that $c = O_F(1)$ and remove the dependency on $c$ in the final bounds.  
\end{proof}

In the above level of distribution, since $|\calR_{\alpha,M}| = (X/M)^2|\calR|,$ $E(\bD)$ is sufficiently small for $D\leq X^2 \log(X)^{-10^{-5}}$ if $F(x,y)$ has no linear factors, and for $D_i\leq X^{1/2}\log(X)^{-1-10^{-5}}$ if $F(x,y)$ has a linear factor. So it remains to bound the contribution from the large moduli. For this, we cite two bounds from \cite[\S10]{MyManinChatelet} with $L=t=k=1$ and $B^{1/2}=X$. 

\begin{lemma}[{\cite[Proposition 10.1]{MyManinChatelet}}]\label{lem: large moduli 1}
    Let $F_{\alpha,M}$ be a squarefree binary form. Let $\bD = (D_i)_{i=1}^r$ satisfy that $\log(D)\gg \log(X)$.
Then the following bound holds:
\begin{multline*}
    \sum_{d_i\sim D_i} T(\bd;X;\calR,\alpha\bmod M;\bc,n,\ba,b) \\ \ll_{\calR,M,F,Q}  \left(\frac{X^2}{b^2} + \frac{X^{3/2}}{b}\right) \cdot  \log(X)^{\beta_{F,Q}-10^{-5}}\cdot  \left(\frac{X^2}{D} + \frac{D}{X^2}+\log\log(X)\right)^{1/2} . 
\end{multline*}
Here $\beta_{F,Q}$ is defined as in Theorem \ref{thm: correlation}.
\end{lemma}
\begin{proof}
    We make the remark that again replacing the box $B^{1/2}\mathcal{B}(\bx_0,L)$ with the region $\calR_{\alpha,M}$ brings no extra complications since $\calR_{\alpha,M}$ can be contained in a box of side length $O_{\calR,M}(X)$. We also note that in the notation of \cite[Proposition 10.1]{MyManinChatelet}, $\beta_{F,Q} = \varrho_{\Delta,F}-2$ and that $\varrho_{\Delta,F}/2-1 \leq \varrho_{\Delta,F}-2.$
\end{proof}

\begin{lemma}{{\cite[Proposition 10.2]{MyManinChatelet}}}\label{lem: large moduli 2}
    Let $F_{\alpha,M}$ be a squarefree binary form with a linear factor, denoted as $F_1(x,y)$. Then, the following bound holds:
    $$\sum_{d_1\sim D_1}\sum_{\substack{d_i\ll X^{\deg(F_i)/2}\\ i=2,...,r}} T(\bd;X;\calR,\alpha\bmod M; \bc,n,\ba,b) \ll_{\calR,M,F,Q} X^2 \cdot \log(X)^{\beta_{F,Q}-10^{-5}}.$$
\end{lemma}
\begin{proof}
    Since $\calR_{\alpha,M}$ can be contained in a box of side length $O_{\calR,M}(X)$, the proof can proceed as in \cite[Proposition 10.2]{MyManinChatelet}. 
\end{proof} 

Combining Lemmas \ref{lem: LoD}, \ref{lem: large moduli 1}, and \ref{lem: large moduli 2} as in \cite[\S11]{MyManinChatelet}, we achieve that 
\begin{multline*}
    S(X;\calR,\alpha\bmod M;\bc,n,\ba,b) = |\calR_{\alpha,M}| \sum_{d_i\ll X^{\deg(F_i)/2}} \prod_{i=1}^r \chi(d_i) \cdot \frac{\varrho_F(\bd;\alpha\bmod M;\bc,n,\ba,b)}{(bcdn\Delta)^2}\\
    + O_{\calR,M,F,Q} \left(X^2 \log(X)^{\beta_{F,Q}-10^{-6}}\right).
\end{multline*}
Since we can separately bound the contribution from the large $b\geq \log(X)^{10^{-7}}$, we can sum over all $b\ll_\calR X$. Hence, 
\begin{multline*}
    \sum_{b\ll_\calR X} \mu(b) S(X;\calR,\alpha\bmod M;\bc,n,\ba,b) \\ = |\calR_{\alpha,M}| \sum_{b} \frac{\mu(b)}{b^2}\sum_{d_i\ll X^{\deg(F_i)/2}} \prod_{i=1}^r \chi(d_i) \cdot \frac{\varrho_F(\bd;\alpha\bmod M;\bc,n,\ba,b)}{(cdn\Delta)^2} + O_{\calR,M,F,Q}(X^2\log(X)^{\beta_{F,Q}-10^{-7}}).
\end{multline*}
By the computations in \cite[\S11]{MyManinChatelet}, we can rearrange this main term to be 
\begin{equation*}
    \sum_{d_i\ll X^{\deg(F_i)/2}} \prod_{i=1}^r \chi(d_i) \cdot \frac{\varrho_F^*(\bd;\alpha\bmod M;\bc,n,\ba)}{(cdn\Delta)^2},
\end{equation*}
where 
\begin{equation*}
    \varrho_F^*(\bd;\alpha\bmod M;\bc,n,\ba) = \#\left\{x,y\bmod cdn\Delta: \begin{array}{c}
         \gcd(Mx+\alpha_1, My+\alpha_2, cd)=1,\\
         p_{\Delta}(F_{\alpha,M}(x,y)) =n, \\ p_{\neg\Delta}(F_i(x,y)/c_i)\equiv a_i\bmod \Delta
    \end{array}\right\}.
\end{equation*}

Finally, it remains to consider the sum over $d_i$. Consider for a fixed $\bc,n,\ba$: 
\begin{equation*}
   \xi(s;\alpha\bmod M; \bc,n,\ba) = \sum_{d_i}\prod_{i=1}^r\chi(d_i) \cdot\frac{\varrho_F^*(\bd;\alpha\bmod M; \bc,n,\ba,b)}{(cn\Delta)^2 d^{2s}}.
\end{equation*}
It is straightforward to see that since for primes $p_i$, the expression $\varrho_F((p_i^{e_i}); \alpha\bmod M; \bc,n,\ba)\ll_r p_1^{e_1}\dots p_r^{e_r}$, we have that the Dirichlet series $\xi(s;\alpha\bmod M,\bc,n,\ba)$ converges absolutely for $\Re(s)>1$. Furthermore it has an Euler product expansion and that
\begin{equation}
\xi(s;\alpha\bmod M; \bc,n,\ba) = R_{\alpha \bmod M,\bc,n,\ba}(s) \prod_{i=1}^r \zeta_{K_{F_i}}(s,\chi),
\end{equation}
where $R_{\alpha \bmod M,\bc,n,\ba}(s)$ is a factor that converges for $\Re(s)>3/4$, the field $K_{F_i}$ is the splitting field of $F_i$, and $\zeta_{K_{F_i}}(s,\chi)$ is the twist of Dedekind zeta function of $K_{F_i}$ by the quadratic character $\chi$ mod $\Delta.$ This decomposition is discussed in more detail in \cite[\S11.5]{MyManinChatelet} in the evaluation of $V(\mathbf{k},\bc,D)$.

Thus, we can rewrite the expression as 
\begin{multline*}
    S(X;\calR,\alpha\bmod M;\bc,n,\ba) = |\calR_{\alpha,M}| \frac{R_{\alpha \bmod M,\bc,n,\ba}(1)}{(cn\Delta)^2} \cdot \prod_{i=1}^r \left(\textrm{Res}_{s=1} \zeta_{K_{F_i}}(s,\chi) \cdot \log(X)^{\mathbf{1}_{K_Q\subset K_{F_i}}}\right) \\ + O_{\calR,M,F,Q}\left(X^2\log(X)^{\beta_{F,Q}-10^{-7}}\right). 
\end{multline*}
Here we have used $\textrm{Res}_{s=1} \zeta_{K_{F_i}}(s,\chi)$ to denote the residue at $s=1$ of the $L$-function, or if $\zeta_{K_{F_i}}(s,\chi)$ is analytic at $s=1$ then its value. Recall at this point that $|\calR_{\alpha,M}| = (X/M)^2 |\calR|.$

Finally, we observe the relation that 
\begin{equation}
    \beta_{F,Q} = \sum_{i=1}^r \mathbf{1}_{K_Q\subset K_{F_i}}.
\end{equation}
In the expression for $S(X;\calR,\alpha\bmod M; \bc,n,\ba)$ above, the exponent of $\log(X)$ is $\beta_{F,Q}$ and additionally independent of $(\bc,n,\ba).$ Hence, we have that 
\begin{multline*}
    \calM(X;\calR,\alpha\bmod M;q_1,q_2) = |\calR_{\alpha,M}|\log(X)^{\beta_{F,Q}} \cdot \prod_{i=1}^r \text{Res}_{s=1}\zeta_{K_{F_i}}(s,\chi) \\ \times \sum_{\bc}\frac{\mu(\bc)\chi(\bc)}{c^2} \sum_{\substack{n\leq \log(X)^{10^{-7}}\\ p\mid n\implies p\mid \Delta}} \frac{\chi_{q_1}(p_{q_2}(n)) \chi_{q_2}(p_{q_1}(n))}{n^2} \sum_{\substack{\ba\bmod \Delta\\ \gcd(a_i,\Delta)=1}} \frac{\prod_{i=1}^r \chi_{q_1}(a_i)}{\Delta^2} R_{\alpha \bmod M,\bc,n,\ba}(1)\\
    +\sum_{\substack{n\leq \log(X)^{10^{-7}} \\ p\mid n \implies p\mid \Delta}} O_{\calR,M,F,Q}(X^2\log(X)^{\beta_{F,Q}-10^{-7}}).
\end{multline*}
Observe that we have used above that the sums over $\bc$ and $\ba$ are finite and the number of terms only depends on $F$ and $Q$. 

To establish the asymptotic of Proposition \ref{prop: Eisenstein}, it remains to check that \begin{equation}\label{eq: sum over n bound}\sum_{\substack{n\leq \log(X)^{10^{-7}}\\ p\mid n\implies p\mid \Delta}} X^2\log(X)^{\beta_{F,Q}-10^{-7}} \ll X^2\log(X)^{\beta_{F,Q}-10^{-8}},\end{equation}
and that for any fixed $\bc$ and $\ba$,
\begin{equation}\label{eq: sum over n to converge}\sum_{\substack{n\in \mathbb{N}\\ p\mid n\implies p\mid \Delta}} \chi_{q_1}(p_{q_2}(n))\chi_{q_2}(p_{q_2}(n))\cdot \frac{R_{\alpha \bmod M,\bc,n,\ba}(1)}{n^2}\end{equation}
converges absolutely to a finite constant. 
For both of the estimates above, we recall that the sum over $n$ is over integers satisfying that $p\mid n\implies p\mid \Delta$. Consequently these are sums over $O_Q(\log\log(X)^{\omega(Q)})$ terms and this establishes the upper bound \eqref{eq: sum over n bound} above.

Since $\gcd(d,\Delta)=1$ and $p\mid n\implies p\mid \Delta$, we have that $\gcd(d,n)=1$. It follows from the Chinese remainder theorem for the solutions counted by $\varrho_F(\bd;\alpha\bmod M; \bc,n,\ba)$ that we can bound 
$$R_{\alpha \bmod M,\bc,n\ba}(1)\ll_{F,Q,M} \varrho_F(1;\alpha\bmod M; \bc,n,\ba).$$
After loosening the condition that $p_{\Delta}(F_{\alpha,M}(x,y))=n$ to requiring that $n\mid F_{\alpha,M}(x,y)$, it is clear that for $n\gg_{F,Q,M} 1$, we have that $\varrho_F(1;\alpha\bmod M; \bc,n,\ba) \ll_{F,Q,M} n.$ Thus, we have that 
\begin{equation*}
    \sum_{\substack{n\in \mathbb{N}\\ p\mid n\implies p\mid \Delta}} \left|\chi_{q_1}(p_{q_2}(n))\chi_{q_2}(p_{q_2}(n))\frac{R_{\alpha \bmod M,\bc,n,\ba}(1)}{n^2}\right|\ll_{M,\bc,\ba,F,Q} \sum_{\substack{n\in \mathbb{N}\\ p\mid n\implies p\mid \Delta}} \frac{1}{n}. 
\end{equation*}
The sum over those $n$ such that $p\mid n \implies p\mid \Delta$ is a product of convergent geometric series, and hence converges. This gives us that \eqref{eq: sum over n to converge} will indeed converge absolutely to a finite constant (depending on $\ba,\bc,F,Q$ and $\alpha\bmod M$). Since we allow arbitrary dependence over $\ba,\bc,F,Q$ and $\alpha\bmod M$,
this completes the proof of Proposition \ref{prop: Eisenstein}

\end{proof}
\begin{remark}
    Since we do not attempt to evaluate this constant here, we only require \eqref{eq: sum over n to converge} to converge absolutely. In \cite[\S12]{MyManinChatelet}, the equivalent expression ($\nabla_{q_1,q_2}(\bc)$) is analyzed further and shown to be nonzero if and only if there are no local obstructions to rational points on an auxiliary variety $Y^*_{\Delta,\bc,f_1,\dots,f_r}$; however, in this situation $R_{\alpha \bmod M,\bc,n,\ba}(1)$ is more complicated to analyze due to the extra congruence condition. 
\end{remark}

\subsection{Proof of Theorem \ref{thm: correlation}}
Recalling that 
$$r_Q(n) = \frac{|\calU|}{|\calC|} \sum_{\psi\in \hat{\calC}} \overline{\psi([Q])} \sum_{N(\fa)=n} \psi(\fa),$$
and applying Proposition \ref{prop: cusp} (for $\ord(\psi)\geq 3)$ and Proposition \ref{prop: Eisenstein} (for $\ord(\psi)\leq 2)$, we establish that
\begin{align*}
    \sum_{\substack{x,y\in X\calR \\ \gcd(x,y)=1 \\ (x,y)\equiv \alpha\bmod M}}r_Q(F(x,y)) &= 
    \frac{|\calU|}{|\calC|} \sum_{\substack{q_1q_2=\Delta \\ \gcd(q_1,q_2)=1}}' c_{\alpha\bmod M, q_1,q_2,F,Q} X^2 |\calR| \log(X)^{\beta_{F,Q}} \\ &+ O_{\calR,M,F,Q}(X^2\log(X)^{\beta_{F,Q}-10^{-8}})  + 
    \frac{|\calU|}{|\calC|} \sum_{\substack{\psi\in \hat{C}\\ \ord(\psi)\geq 3}}  O_{F,Q,\calR}(X^2\log(X)^{\beta_{F,Q}-0.066}) 
    \\  
    &= c_{\alpha, F,Q} X^2|\calR| \log(X)^{\beta_{F,Q}} + O_{\calR, F,Q,M} (X^2\log(X)^{\beta_{F,Q}-10^{-8}}),
\end{align*}
where we take the constant $c_{\alpha,F,Q}$ to be $$c_{\alpha,F,Q} = \frac{|\calU|}{|\calC|} \sum_{\substack{q_1q_2=\Delta \\ \gcd(q_1,q_2)=1}}' c_{\alpha\bmod M, q_1,q_2,F,Q},$$
where this final sum is a (finite) restricted sum over genus characters for $K_Q$.\qed

\section{Computations of the Picard group}

In this section, we will compute the Picard group for general values of $A$, $B$, and $Q(u,v)$ to establish the expected growth rate for Manin's conjecture (Conjecture \ref{conj: Manin}).

\subsection{Computations of the Picard group}\label{sec: Picard}

We want to compute the rank $\varrho$ of the Picard group over $\Q$ of the desingularization of $S_{A,B,Q}$ and hence the exponent of $\log(T)$ in Conjecture \ref{conj: Manin}. First, we consider the geometry of the surface $S_{A,B,Q}$ over $\overline{\Q}$. Let $\alpha,\overline{\alpha}$ be the two roots of the quadratic form $Q(x,1)=0$. We can see that there two singularities given by the points $[0,0,\alpha,1]$ and $[0,0,\overline{\alpha},1]$ sitting in the weighted projective space $\bP(2,3,1,1)$. Additionally, we have that there are three lines given by $$L_i = \{y=0, x-\beta_iQ(u,v)=0\}, i=1,2,3,$$
where $\beta_i$ are roots of the cubic $x^3 +Ax +B = 0$, and the two lines in $\bP(2,3,1,1)$: $$E_{+} = \{u-\alpha v=0, y^2 = x^3\}, E_{-} = \{u-\overline{\alpha}v=0, y^2 = x^3\}.$$
We exclude $L_1,L_2,L_3,E_1,E_2$ by restricting to the open set $U_1\cap U_2$ as defined in Theorem \ref{thm: Manin}. 

Next, we will compute the minimal desingularization of $S_{A,B,Q}$ over $\overline{\Q}$. Then, we will find the rank of $$\Pic_{\Q}(\Tilde{S}) = \Pic_{\Q(\alpha)}(\Tilde{S})^{\Gal(\Q(\alpha)/\Q)}.$$

\subsubsection{Computations over $\overline{\Q}$}
We denote the minimal desingularization of $S_{A,B,Q}$ as $\Tilde{S}_{A,B,Q}$. To compute the desingularization, we first will blow up the singular point $[0,0,\alpha,1]$; we denote this blow up as $S_{\alpha}$. We look at the image in three charts: $D(x)$, $D(y)$ and $D(u-\alpha)$. In $D(x)$, the defining equation for the blowup is given by $$y_1^2 - x(1+A(u_1x+\alpha-\overline{\alpha})^2 u_1^2 + B(u_1x+\alpha-\overline{\alpha})^3u_1^3))=0,$$ where we have written $y=y_1x$ and $u-\alpha = u_1 x$. So, the exceptional divisor is given by $$\{y_1=0, x=0\}$$ where $u_1$ is allowed to range over $\overline{\Q}$. 
We recall that $\alpha-\overline{\alpha}= \sqrt{\Delta(Q)}.$ Let $z_i$ denote the roots of the equation $1+A\Delta(Q) z^2 + B(\sqrt{\Delta(Q)})^3 z^3=0$. Then there are double points on the exceptional divisor at $y_1=0, x=0$ and $u_1 = z_i$ for $i=1,2,3$. 

 Next, we look at the chart $D(y)$; here, the defining equation is given by $$y^2 ( 1+ y(x_1^3 + A(u_1x+\alpha-\overline{\alpha})^2 (x_1u_1^2) + B(u_1x+\alpha-\overline{\alpha})^3 (u_1)^3)= 0,$$ where we have taken $x=x_1y$ and $u-\alpha = u_1y$. We can see that there are no additional singularities in this chart. 
 
 In our final chart, $D(u-\alpha)$, we get the defining equation $$(u-\alpha)^2(y_1^2 - (u-\alpha)(x_1^3 + A(u-\overline{\alpha})^2 x_1 + B(u-\overline{\alpha})^3)=0.$$
    The exceptional divisor is given by $$\{y_1=0, u=\alpha, x_1\in \overline{\Q}\}.$$ There are double points given by solutions $\lambda_i$ to $\lambda^3 + A\Delta(Q) \lambda+ B(\sqrt{\Delta(Q)})^3=0$. 
    
    Thus, we can see that in our blow up $S_{\alpha}$, there is an exception divisor $\bP^1$, which we will denote as $\Gamma_{\alpha}$. Additionally, there are three singularities corresponding to the roots $z_i$ of $z^3 + A\Delta(Q) z + B(\sqrt{\Delta(Q)})^3.$ We note that since we have assumed that $4A^3-27B^2\neq 0$, these are three distinct roots.
    
So, we blow up $S_{\alpha}$ at each of these singularities corresponding to $z_1,z_2,$ and $z_3$ to form $S_{\alpha}'$. We can see that this blowup will have three additional exceptional divisors $\Gamma_{\alpha,1},\Gamma_{\alpha,2}, \Gamma_{\alpha,3}$ isomorphic to $\bP^1$ that do not intersect each other and intersect $\Gamma_{\alpha}$ each at one point. Additionally, all four of these divisors will be $(-2)$-curves, i.e. $\Gamma_i\cdot \Gamma_i = -2$, since they are blow ups of double points. 

Next, we can blow up $S_{\alpha}'$ at $[0,0,\overline{\alpha},1]$ once to create $S_{\overline{\alpha}}'$. Analogously to the above computation, we will have an exceptional divisor $\Gamma_{\overline{\alpha}}$ which is isomorphic to $\bP^1$, and three double points corresponding to roots of $z^3 + A\Delta(Q) z + B(-\sqrt{\Delta(Q)})^3=0$. We then have a final blow up at each of these singularities to create $S_{\overline{\alpha}}'' = \Tilde{S}_{A,B,Q}$. This process generates four new exceptional divisors, which we label $\Gamma_{\overline{\alpha}}, \Gamma_{\overline{\alpha},1},\Gamma_{\overline{\alpha},2},\Gamma_{\overline{\alpha},3}$. $\Gamma_{\overline{\alpha},1},\Gamma_{\overline{\alpha},2}, \Gamma_{\overline{\alpha},3}$ will be isomorphic to $\bP^1$, will not intersect each other, and will each intersect $\Gamma_{\overline{\alpha}}$ at one point. Again, these divisors will be $(-2)$-curves. 

We can summarize the above findings by stating that $S_{A,B,Q}$ has two $D_4$ singularities. In particular, we get the following extended Dynkin diagram where the two $D_4$ singularities are pictured in black. The lines $L_i$ pictured in dashed red will intersect both $\Gamma_{\alpha,i}$ and $\Gamma_{\overline{\alpha},i}$, and the lines $E_+, E_-$ pictured in dash blue will intersect $\Gamma_{\alpha}$ and $\Gamma_{\overline{\alpha}}$ respectively. 

\[\begin{tikzcd}
	& {\Gamma_{\alpha,1}} && {\textcolor{gray}{L_1}} && {\Gamma_{\overline{\alpha},1}} \\
	{\Gamma_\alpha} & {\Gamma_{\alpha,2}} && {\textcolor{gray}{L_2}} && {\Gamma_{\overline{\alpha},2}} & {\Gamma_{\overline{\alpha}}} \\
	& {\Gamma_{\alpha,3}} && {\textcolor{gray}{L_3}} && {\Gamma_{\overline{\alpha},3}} \\
	& {\textcolor{gray}{E_+}} &&&& {\textcolor{gray}{E_-}}
	\arrow[no head, from=2-2, to=2-4]
	\arrow[no head, from=1-2, to=1-4]
	\arrow[no head, from=1-4, to=1-6]
	\arrow[no head, from=2-4, to=2-6]
	\arrow[no head, from=3-2, to=3-4]
	\arrow[no head, from=3-4, to=3-6]
	\arrow[no head, from=2-1, to=1-2]
	\arrow[no head, from=2-1, to=2-2]
	\arrow[no head, from=2-1, to=3-2]
	\arrow[no head, from=1-6, to=2-7]
	\arrow[no head, from=2-6, to=2-7]
	\arrow[no head, from=3-6, to=2-7]
	\arrow[no head, from=2-1, to=4-2]
	\arrow[no head, from=4-2, to=4-6]
	\arrow[no head, from=4-6, to=2-7]
\end{tikzcd}\]

Since $S_{A,B,Q}$ has two $D_4$ singularities, we have that $\Tilde{S}_{A,B,Q}$, as computed above, is the minimal desingularization of $S$. 
Since $\Tilde{S}_{A,B,Q}$ is the minimal desingularization of a del Pezzo surface of degree one, we know that $$ \rk\Pic_{\overline{\Q}}(\Tilde{S}_{A,B,Q}) = 10-1 = 9$$
(see, for example, \cite{Derenthal}).
Let us denote by $H$ the hyperplane section of $\Tilde{S}_{A,B,Q}.$ We claim that the exceptional divisors computed above, along with $H$, generate the entire Picard group over $\overline{\Q}.$

\begin{lemma}
$$\mathrm{Pic}_{\overline{\Q}}(\Tilde{S}) = \langle H, \Gamma_{\alpha},\Gamma_{\alpha,1},\Gamma_{\alpha,2},\Gamma_{\alpha,3}, \Gamma_{\overline{\alpha}}, \Gamma_{\overline{\alpha},1}, \Gamma_{\overline{\alpha},2},\Gamma_{\overline{\alpha},3}\rangle.$$
\end{lemma}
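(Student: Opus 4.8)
The plan is to show that the nine listed classes span $\Pic_{\overline{\Q}}(\tilde S)$ over $\Z$ (equivalently over $\Q$, since $\Pic$ is torsion-free here), which since we already know $\rk\Pic_{\overline{\Q}}(\tilde S) = 9$, reduces to showing they are linearly independent --- or, dually, that they generate a finite-index, in fact index-one, sublattice. Concretely, I would proceed as follows.

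\textbf{Step 1: the classes are linearly independent.} The eight exceptional curves $\Gamma_\alpha, \Gamma_{\alpha,i}, \Gamma_{\overline\alpha}, \Gamma_{\overline\alpha,i}$ ($i=1,2,3$) form two disjoint $D_4$-configurations of $(-2)$-curves; their intersection matrix is the direct sum of two copies of the (negative-definite) $D_4$ Cartan matrix, which has determinant $4$, hence nonzero. So these eight classes are linearly independent in $\Pic_{\overline\Q}(\tilde S)\otimes\Q$. To see that $H$ is independent of them, note $H\cdot H = 1 > 0$ (it is the hyperplane section of a degree-one del Pezzo model) while any class in the span of the $\Gamma$'s has non-positive self-intersection (negative-definite lattice), so $H$ cannot lie in that span. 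Thus all nine classes are linearly independent, hence span a rank-$9$ sublattice $\Lambda \subset \Pic_{\overline\Q}(\tilde S)$, which by the rank computation has finite index.

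\textbf{Step 2: the sublattice has index one.} Here the cleanest route is to exhibit enough further curves whose classes are visibly in $\Lambda$ and which, together, force primitivity; equivalently, compute the discriminant of $\Lambda$ and compare to the known discriminant of $\Pic_{\overline\Q}(\tilde S)$ for a degree-one del Pezzo (the lattice is unimodular-twisted: $\langle 1\rangle \oplus E_8(-1)$, of discriminant $\pm 1$... more precisely $K_{\tilde S}^\perp \cong E_8$). Rather than invoke that, I would argue geometrically: pull back the geometry to $\tilde S$ as an iterated blow-up. The surface $S_{A,B,Q}$ is a double-blow-down of $\tilde S$ only indirectly; better, realize $\tilde S$ as obtained from $\bP(2,3,1,1)$-level data by a sequence of eight blow-ups of smooth points (the two $D_4$'s require $2\times 4$ point-blow-ups). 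If one can write $\tilde S$ as an eight-fold blow-up of a minimal rational surface $\bP^2$ (or $\bP^1\times\bP^1$, or a Hirzebruch surface) --- which holds because every rational surface is such --- then $\Pic_{\overline\Q}(\tilde S)$ is freely generated by the pullback of a hyperplane (or the two rulings) together with the eight total transforms $E_1,\dots,E_8$ of the blown-up points. It then suffices to express each of our nine classes in terms of this basis and check the change-of-basis matrix is in $\GL_9(\Z)$.

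\textbf{Step 3: bookkeeping the change of basis.} I would track, through the explicit blow-up sequence carried out above ($S_\alpha \to S_\alpha' \to S_{\overline\alpha}' \to \tilde S$), the classes of the exceptional divisors: $\Gamma_\alpha$ is the first exceptional curve (minus the three later ones blown up on it, so $\Gamma_\alpha = E_{(\alpha)} - E_{(\alpha,1)} - E_{(\alpha,2)} - E_{(\alpha,3)}$ in total-transform coordinates), while $\Gamma_{\alpha,i} = E_{(\alpha,i)}$, and symmetrically for $\overline\alpha$; and $H$ is the pullback of the ample generator on the del Pezzo model, which in the eight-point-blow-up-of-$\bP^2$ picture is $3\ell - E_1 - \cdots - E_8$ (the anticanonical class, since degree one) or a suitable translate. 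Assembling these, the nine classes $H, \Gamma_\alpha,\dots$ are related to $\ell, E_1,\dots,E_8$ by a triangular-ish unimodular matrix, giving index one.

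\textbf{Main obstacle.} The genuinely delicate point is Step 2 --- confirming primitivity rather than just finite index. The intersection lattice of the eight $\Gamma$'s alone has discriminant $16$, so a priori $\Lambda$ could be a proper finite-index overlattice's sublattice; the class $H$ must "fill in" exactly the right glue vectors. Making this rigorous requires either (a) the global structure theorem $K_{\tilde S}^\perp \cong E_8$ for degree-one del Pezzo desingularizations together with a check that $H$ and the $\Gamma$'s generate $\Z K_{\tilde S} \oplus E_8$, or (b) honestly carrying out the eight-fold-blow-up bookkeeping of Step 3. I would take route (b) since the blow-up sequence is already written down explicitly in the preceding computation, so the change-of-basis matrix can be read off directly and its determinant checked to be $\pm 1$.
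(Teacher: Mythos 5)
Your Step 1 is, essentially verbatim, the paper's entire proof: the paper writes down the same $8\times 8$ intersection matrix of the $\Gamma$'s, observes that its determinant is nonzero (it is the direct sum of two $D_4(-1)$ blocks), notes that $H$ is independent of the span of the $\Gamma$'s, and concludes. The paper does not attempt anything like your Steps 2--3.

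Your instinct in the ``Main obstacle'' paragraph --- that linear independence of nine classes in a rank-$9$ lattice only gives a finite-index sublattice, and that primitivity is the real issue --- is exactly right, but Step 2 as stated cannot be completed, because the index is in fact \emph{not} one. Since $\tilde{S}_{A,B,Q}$ is a smooth rational surface, $\Pic_{\overline{\Q}}(\tilde{S}_{A,B,Q})\cong H^2(\tilde{S}_{A,B,Q},\Z)$ is unimodular. On the other hand, $H$ (the pullback of a hyperplane section of $S_{A,B,Q}$, which may be chosen to miss the two singular points) is orthogonal to every $\Gamma$, so the sublattice $\Lambda$ spanned by your nine classes has discriminant $H^2\cdot\det(D_4)^2 = 16\,H^2$, giving $[\Pic_{\overline{\Q}}(\tilde{S}):\Lambda]=4\sqrt{H^2}\geq 4$. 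So the change-of-basis matrix you propose to compute in Step 3 will have determinant of absolute value at least $4$, never $\pm 1$; the missing glue vectors involve the strict transforms of the curves $L_i$ and $E_\pm$, which are not among the listed generators. The lemma is therefore only correct (and is only used) as the assertion that these nine classes are a basis of $\Pic_{\overline{\Q}}(\tilde{S})\otimes\Q$: everything that follows in the paper is a computation of the rank of the Galois-invariant subspace, for which a Galois-stable $\Q$-basis suffices. Under that reading your Step 1 --- i.e., the paper's argument --- already finishes the proof, and Steps 2--3 should be dropped rather than repaired.
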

\begin{proof}
The hyperplane section will be indepedent of any of the exceptional divisors, so we will look at the intersection numbers of the eight exceptional divisors. All of the six divisors coming from double points will have self-intersection $-2$, so: 
$$\Gamma_{\beta,i}\cdot \Gamma_{\beta,i} = -2,$$ where we are using $\beta$ to denote either $\alpha$ or $\overline{\alpha}.$ 
Additionally, we have that $$\Gamma_{\beta}\cdot \Gamma_{\beta,i}=1$$ and $$\Gamma_{\beta}\cdot \Gamma_{\beta} = -2.$$

Next, we note that we will have $$\Gamma_{\alpha,i}\cdot \Gamma_{\overline{\alpha},j} = 0$$ for any $i,j\in \{\bot,1,2,3\},$ since these are two separate singularities.
Thus, we get the following intersection matrix: 

\begin{equation*}
    \begin{blockarray}{ccccccccc}
    \Gamma_{\alpha} & \Gamma_{\alpha,1} & \Gamma_{\alpha,2} & \Gamma_{\alpha,3} & \Gamma_{\overline{\alpha}} & \Gamma_{\overline{\alpha},1} & \Gamma_{\overline{\alpha},2} &\Gamma_{\overline{\alpha},3} \\
    \begin{block}{(cccccccc)c}
      -2 & 1 & 1 & 1 & 0 & 0 & 0 & 0 & \Gamma_{\alpha} \\
      1 & -2 & 0 & 0 & 0 & 0 & 0 & 0 & \Gamma_{\alpha,1} \\
      1 & 0 & -2 & 0 & 0 & 0 & 0 & 0 & \Gamma_{\alpha,2} \\
      1 & 0 & 0 & -2 & 0 & 0 & 0 & 0 & \Gamma_{\alpha,3} \\
      0 & 0 & 0 & 0 & -2 & 1 & 1 & 1 & \Gamma_{\overline{\alpha}} \\
      0 & 0 & 0 & 0 & 1 & -2 & 0 & 0 & \Gamma_{\overline{\alpha},1} \\
      0 & 0 & 0 & 0 & 1 & 0 & -2 & 0 & \Gamma_{\overline{\alpha},2} \\
      0 & 0 & 0 & 0 & 1 & 0 & 0 & -2 & \Gamma_{\overline{\alpha},3} \\
    \end{block}
    \end{blockarray}
\end{equation*}
This matrix has a nonzero determinant, and so the exceptional divisors are linearly independent. Thus, along with $H$, we have that these exceptional divisors must generate $\Pic_{\overline{\Q}}(\Tilde{S}_{A,B,Q}).$

\end{proof}

Since we understand $\Pic_{\overline{\Q}}(\Tilde{S})$, we aim to understand the Galois action on it. In particular, we want to compute $$\Pic_{\Q}(\Tilde{S}) = \Pic_{\overline{Q}}(\Tilde{S})^{\Gal(\overline{\Q}/\Q)} = \Pic_{\Q(\alpha)}(\Tilde{S})^{\Gal(\Q(\alpha)/\Q)}.$$
For considering the orbits of $\Gal(\overline{\Q}/\Q)$, we note that it suffices to look at the action of $\Gal(K/\Q)$, where $K$ is a splitting field over $\Q(\alpha)$ of the polynomial $z^3 + A\Delta(Q) z + B(\sqrt{\Delta(Q)})^3.$ It is clear that $[H]$ and $[\Gamma_{\alpha}+\Gamma_{\overline{\alpha}}]$ will be invariant under this action. Additionally, we have that $[\Gamma_{\alpha,1}]+[\Gamma_{\alpha,2}]+[\Gamma_{\alpha,3}] + [\Gamma_{\overline{\alpha},1}] + [\Gamma_{\overline{\alpha},2}]+[\Gamma_{\overline{\alpha},3}]$ will invariant under the Galois action. Hence, we can see that $$\rk \Pic_{\Q}(\Tilde{S}) \geq 3.$$

To determine the exact rank, we find that it is clearer to look at the picture over $\Q(\alpha)$.

\subsubsection{Computation over $\Q(\alpha)$}
Over $\Q(\alpha)$, we note that nothing changes in the first blowup to $S_\alpha$. Then for the second blowup, rather than getting an exceptional divisor for each root of $P(z) = z^3 + A\Delta(Q) z + B(\sqrt{\Delta(Q)})^3$, we have a divisor corresponding to each irreducible factor of the polynomial over $\Q(\alpha)$ (as these will correspond to prime ideals). 
In particular, if $P(z)$ is irreducible, then we have the divisor $\Gamma_{\alpha, P(z)}$. If $P(z) = Q(z) L(z)$, where $Q(z)$ is an irreducible quadratic, then we have $\Gamma_{\alpha, Q(z)}$ and $\Gamma_{\alpha, L(z)}$. Finally, if $P(z) = (z - r_1) (z-r_2)(z-r_3)$, then we have a divisor corresponding to each root, $\Gamma_{\alpha, (z-r_1)}, \Gamma_{\alpha,(z-r_2)},\Gamma_{\alpha,(z-r_3)}.$
For the blowup over $[0,0,\overline{\alpha},1]$, the same picture occurs except now we consider the factorization of $\overline{P(z)}$. 

\subsubsection{Computation over $\Q$} Finally, we want to consider the action of $\Gal(\Q(\alpha)/\Q)$ on the divisors over $\Q(\alpha)$. We can see that $$\Gamma_{\alpha}\mapsto \Gamma_{\overline{\alpha}}$$ $$\Gamma_{\alpha, P(z)} \mapsto
\Gamma_{\overline{\alpha},\overline{P(z)}}.$$
So, if $P(z)$ is irreducible in $\Q(\alpha)$, we have that $$\Pic_\Q(\Tilde{S}) = \langle H, \Gamma_\alpha + \Gamma_{\overline{\alpha}}, \Gamma_{\alpha,P(z)}+ \Gamma_{\overline{\alpha},\overline{P(z)}}\rangle$$ and hence has rank 3. 

Otherwise, if $P(z)=Q(z)L(z)$ splits into an irreducible quadratic and a linear factor in $\Q(\alpha)$, we have that the Picard group is: $$\Pic_\Q (\Tilde{S}) = \langle H, \Gamma_{\alpha}+\Gamma_{\overline{\alpha}}, \Gamma_{\alpha,Q(z)}+\Gamma_{\overline{\alpha},\overline{Q(z)}}, \Gamma_{\alpha,L(z)} + \Gamma_{\overline{\alpha},\overline{L(z)}}\rangle.$$
Thus, we get $\varrho_{A,B,Q} =4$ in this case. 

Finally, if $P(z)$ splits completely, we have that the Picard group $$\Pic_{\Q}(\Tilde{S}) = \langle H, \Gamma_{\alpha}+\Gamma_{\overline{\alpha}}, \Gamma_{\alpha,z-r_1} + \Gamma_{\overline{\alpha}, \overline{z-r_1}},\Gamma_{\alpha,z-r_2} + \Gamma_{\overline{\alpha}, \overline{z-r_2}}, \Gamma_{\alpha,z-r_3} + \Gamma_{\overline{\alpha}, \overline{z-r_3}}\rangle.$$
This Picard group then has rank $\varrho_{A,B,Q}= 5$. In summary, we have that 
\begin{equation}
    \varrho_{A,B,Q} = \begin{cases}
        3, & P(z) \textrm{ is irreducible in }K_Q,\\
        4, & P(z) \textrm{ factorizes into a linear and an irreducible quadratic in }K_Q, \\
        5, & P(z) \textrm{ splits completely in }K_Q.
    \end{cases}
\end{equation}

\section{Examples and lower bounds}\label{sec: ex and lower bounds}

In this section, we compute Theorem \ref{thm: Manin} for certain values of $A$ and $B$ such that $4A^3-27B^2\neq 0$. In particular, we check when $c_{A,B,Q}>0$ (and consequently derive lower bounds towards Manin's conjecture) for some examples.

\subsection{The surface $S_{-1,0,u^2+v^2}$} 
In this case, we look at the surface:
$$y^2 = x^3 - x (u^2+v^2)^2.$$
For this surface, we have that $I = 4$ and $J= 0$. As computed by Duke in \cite[Example 1]{Duke}, there are two classes of binary quartics in $\calF(1,0)$:
$$x^4 + 6x^2y^2 + y^4, \textrm{ and }x^4 + 4y^4.$$

First, we calculate the exponent $\beta_{-1,0,u^2+v^2}.$ Note that $K_Q = \Q(i)$ since $Q(u,v)=u^2+v^2$; since $K_Q$ has class number one, the Hilbert class field $\calH = K_Q$ and the Proposition \ref{prop: cusp} can always be applied. Next, to calculate the exponents we treat each binary quartic separately:
\begin{itemize}
    \item Consider $x^4+6x^2y^2 + y^4 = (x^2+3y^2)^2 - 8 y^4.$ The splitting field of this binary quartic form is given by $$K_F = \Q(\sqrt{2\sqrt{2}-3}, \sqrt{-3-2\sqrt{2}}),$$
    which does not contain $K_Q$. Thus, $\beta_{x^4+6x^2y^2+y^4, u^2+v^2}=0.$
    \item Consider $x^4+4y^4 = (x^2+2xy+2y^2)(x^2-2xy+2y^2).$ The splitting field of this binary quartic form is given by 
    $$K_F = \Q(i) = K_Q.$$
    Thus, $\beta_{x^4+4y^4,u^2+v^2}=2.$
\end{itemize}
As a result, $\beta_{-1,0,u^2+v^2} = 2$ and we know that 
$$N(T) = c_{-1,0,u^2+v^2} \cdot T\log(T)^2 + O(T\log(T)^{2-10^{-8}}).$$

Next, we hope to analyze $c_{-1,0,u^2+v^2}$ and show that it is strictly positive. Since $u^2+v^2$ is a norm form of $\Q(i)$, we can apply \cite[Proposition 8.1, \S12]{MyManinChatelet} to deduce that $c_{-1,0,u^2+v^2}=0$ if and only if the Ch\^atelet surface given by $\{x^2+y^2 = z^4+4\}$ has a local or Brauer-Manin obstruction. One can see that the points $u=0,v=\pm 2, z=0$ are rational points on the surface $u^2+v^2 = z^4+4.$ So, $c_{x^4+4y^4,u^2+v^2}>0$.

Consequently, we deduce a lower bound towards Manin's conjecture. Specifically, let $\tilde{N}(T)$ be defined as in Conjecture \ref{conj: Manin}; then we know that 
$$\tilde{N}(T)\gg T\log(T)^2.$$
In comparison, Manin's conjecture predicts that since $\varrho_{-1,0,u^2+v^2} =5$ that $\tilde{N}(T)\asymp T\log(T)^4$ in this case.

\subsection{The surface $S_{-1,0,u^2+5v^2}$}
In this case, we look at the surface $$y^2 = x^3 - x(u^2+5v^2)^2.$$ Again, we have that $A =-1$ and $B=0$; thus we take $I= 4$ and $J=0$. From our computations of the splitting fields of $x^4+6x^2y^2+y^4, x^4+4y^4$ above, we can see that $(-1,0,u^2+5v^2)$ is a disassociated tuple. Thus, we can apply directly Theorem \ref{thm: Manin} and see that 
$$N(T) = c_{-1,0,u^2+5v^2} \cdot  T + O(T\log(T)^{-10^{-8}}).$$

We know that $u^2+5v^2$ is a norm form for the quadratic field $\Q(\sqrt{-5})$; thus, to determine if $c_{-1,0,u^2+5v^2}>0$ it suffices to study two Ch\^atelet surfaces: 
$$ x^2+5y^2 = z^4+6z^2+1, \hspace{0.5cm} x^2+5y^2 = z^4+4.$$
In fact, if we can find a rational point on either Ch\^atelet surface then we know that $c_{-1,0,u^2+5v^2}>0.$ We can easily see that $(\pm 1, 0, 1)$ (resp. $(0, \pm 1, \pm 1)$) is a rational point on the first (resp. second) Ch\^atelet surface. Define $\tilde{N}(T)$ again as in Conjecture \ref{conj: Manin}; we again deduce a lower bound towards Manin's conjecture: 
$$\tilde{N}(T)\gg T.$$
In comparison, since $z^3+20z$ has two irreducible factors over $K_Q$, we have that $\varrho_{A,B,Q} = 4$. So, Manin's conjecture predicts in this case that $\tilde{N}(T)\asymp T\log(T)^{3}$. 

\subsection{The surface $S_{-97/48, 955/864, u^2+v^2}$} \label{subsec: BM example}Finally, we consider the surface $$y^2 = x^3 - \frac{97}{48} x (u^2+v^2)^2 + \frac{955}{864}(u^2+v^2)^3.$$
In this case, we have that $A = -97/48$ and $B = 955/864$, which are not integers. Nevertheless, we can proceed to look at the integral solutions to this surface given by Mordell's parameterization. Using that $I = 97/12$ and $J = -955/216$ and the fundamental domain described by Mordell \cite{Mordell} in his finiteness theorem for the class number of binary quartics, we achieve that there are four classes in $\calF(97/12, -955/216):$
$$-x^4 + 5x^2y^2 - 6y^4, x^4 + 5x^2y^2 + 6y^4, -2x^4 + 5x^2y^2 - 3y^4, 2x^4 + 5x^2y^2 + 3y^4.$$
Note that all four of the above expression factor further.
In particular,
$$-x^4 + 5x^2y^2 - 6y^4 = (-x^2+3y^2)(x^2-2y^2) \implies K_{-x^4+5x^2y^2-6y^4} = \Q(\sqrt{2},\sqrt{3}).$$
$$x^4+5x^2y^2+6y^4 = (x^2+3y^2)(x^2+2y^2) \implies K_{x^4+5x^2y^2+6y^4} = \Q(\sqrt{-2},\sqrt{-3}).$$
$$-2x^4+5x^2y^2 - 3y^4 = (y-x)(y+x)(2x^2-3y^2) \implies K_{-2x^4+5x^2y^2-3y^4} = \Q(\sqrt{3/2}).$$
$$2x^4+5x^2y^2+3y^4 = (x^2+y^2)(2x^2+3y^2)\implies K_{2x^4+5x^2y^2 + 3y^4} = \Q(i,\sqrt{-3/2}).$$
From the above calculations, we can see that:
$$\beta_{-x^4 + 5x^2y^2 - 6y^4, u^2+v^2}=0,\beta_{x^4+5x^2y^2+6y^4, u^2+v^2}=0,$$ $$\beta_{-2x^4+5x^2y^2-3y^4, u^2+v^2}=0, \beta_{2x^4+5x^2y^2+3y^4,u^2+v^2}=1. $$
Thus, we get that $\beta_{-97/48,955/864,u^2+v^2}= 1.$ 

The reason we focus on this particular example is that it demonstrates an interesting phenomenon: there exists a Brauer-Manin obstruction to rational points in one of the fibers. Specifically, Iskovskikh's famous counter example to the Hasse principle for Ch\^atelet surfaces \cite{Iskovskikh} appears to force $c_{-x^4+5x^2y^2-6y^4, u^2+v^2}=0.$

On the other hand, we can construct explicit points on the Ch\^atelet surface $$u^2+v^2 = 3z^4 + 5z^2 + 2.$$ So, we must have that $c_{2x^4+5x^2y^2+3y^4, u^2+v^2}>0.$ Since we also calculated that $2x^4+5x^2y^2+3y^4$ is the only form for which $\beta_{F,Q}>0$, we must have that on a particular thin subset of $$y^2= x^3 - \frac{97}{48}x (u^2+v^2)^2 + \frac{955}{864}(u^2+v^2)^3,$$
there are $\asymp T\log(T)$
integral points.
\begin{remark}
    This example is a surface where there is a Brauer-Manin obstruction blocking the existence of integral points  corresponding to one of the relevant binary quartic forms, but after ranging over all of the binary quartics in $\calF(97/12, -955/216)$ there will still be integral points on the other fibers. It would be interesting to see if one can construct a surface $S_{A,B,Q}$ where the Brauer-Manin obstructions block the existence of points on all fibers. 
\end{remark}

\bibliographystyle{abbrv}
\bibliography{biblio}

\end{document}